\newtheorem{theorem}{Theorem}[section]
\newtheorem{corollary}[theorem]{Corollary}
\newtheorem{lemma}[theorem]{Lemma}
\theoremstyle{definition}
\newtheorem{definition}[theorem]{Definition}
\newtheorem{remark}[theorem]{Remark}
\newcommand{\Eps}{{\mathcal{\varepsilon}}}
\newcommand{\sign}{\mathop{\rm sgn}\nolimits}
\numberwithin{equation}{section}
\newcommand{\ds}{\displaystyle}
\title[piecewise $q$-monotone trigonometric approximation ]{No Jackson-type estimates for piecewise $q$-monotone, $q\ge3$, trigonometric approximation}
\author[ D. Leviatan] {Dany Leviatan}
\address[D. Leviatan]{Raymond and Beverly Sackler School of Mathematical
Sciences, Tel Aviv University, Tel Aviv 69978, Israel}
\email{\tt leviatan@tauex.tau.ac.il}
\author[O. V. Motorna]{Oksana  V. Motorna}
\address[O. V. Motorna]{Faculty of Radio Physics, Electronics and Computer Systems, Taras Shevchenko National University of Kyiv, 01601 Kyiv, Ukraine}
\email{\tt omotorna@ukr.net}
\author[ I. A. Shevchuk]{Igor A. Shevchuk}
\address[I. A. Shevchuk]{Faculty of Mechanics and Mathematics, Taras
Shevchenko National University of Kyiv, 01601 Kyiv, Ukraine} \email{\tt
shevchuk@univ.kiev.ua}
\keywords{Piecewise $q$-monotone functions, Co-$q$-monotone trigonometric approximation, Degree of approximation}
\subjclass[2010]{42A05, 42A10, 41A17, 41A25, 41A29}
\begin{document}

\begin{abstract}
We say that a function $f\in C[a,b]$ is $q$-monotone, $q\ge3$, if $f\in C^{q-2}(a,b)$ and $f^{(q-2)}$ is convex in
$(a,b)$. Let $f$ be continuous and $2\pi$-periodic, and change its $q$-monotonicity finitely many times in $[-\pi,\pi]$. We are interested in estimating the degree of approximation of $f$ by trigonometric polynomials which are co-$q$-monotone with it, namely, trigonometric polynomials that change their $q$-monotonicity exactly at the points where $f$ does. Such Jackson type estimates are valid for piecewise monotone ($q=1$) and piecewise convex (q=2) approximations. However, we prove, that no such estimates are valid, in general, for co-$q$-monotone approximation, when $q\ge3$.
\end{abstract}

\maketitle

\centerline{Dedicated to the memory of our friend Professor Gerhard Opfer}
\centerline{15.9.1935--20.2.2020}

\section{Introduction and the main results}
A function $f\in C[a,b]$ is called $q$-monotone, $q\ge2$, $q\in \mathbb{N}$, if $f\in C^{q-2}(a,b)$ and $f^{(q-2)}$ is a convex function on
$(a,b)$. For the sake of uniformity, for $q=1$, we say that $f\in C[a,b]$ is $1$-monotone, if it is nondecreasing in $[a,b]$.

Let $s\in\mathbb N$ and $\mathbb Y_s:=\{Y_s\}$ where $Y_s=\{y_i\}_{i=1}^{2s}$ such that $y_{2s}<\cdots<y_1<y_{2s}+2\pi=:y_0$. We say that a $2\pi$-periodic function $f\in C(\mathbb{R})$, is piecewise $q$-monotone with respect to $Y_s$, if it changes its $q$-monotonicity at the points $Y_s$, that is, if $(-1)^{i-1}f$ is  $q$-monotone on $[y_i,y_{i-1}]$, $1\le i\le 2s$.  We denote by $\Delta^{(q)}(Y_s)$ the collection of all such piecewise $q$-monotone functions. Note that if, in addition, $f\in C^q(\mathbb{R})$, then  $f\in\Delta^{(q)}(Y_s)$, if and only if,
$$
f^{(q)}(t)\prod_{i=1}^{2s}(t-y_i)\ge0,\quad t\in[y_{2s},y_0].
$$
\begin{remark} We do not consider the case of odd number of points $y_i\in[-\pi,\pi)$, since, in that case, the collection  $\Delta^{(q)}(Y_s)$ consists only of constant functions. Note that we also have excluded the case $s=0$, namely, the collection of $q$-monotone functions, as, again, it consists only of constant functions.
\end{remark}
We also need the notation $W^r,\quad r\in\mathbb{N},$ for the Sobolev class of $2\pi$-periodic functions $f\in AC^{(r-1)}(\mathbb{R})$, such that
$$
\|f^{(r)}\|\le2.
$$
For a $2\pi$-periodic function $g$, denote
$$
\|g\|:={\rm esssup}_{x\in\mathbb{R}}|g(x)|.
$$
If, in addition, $g$ is continuous, then, of course,
$$
\|g\|=\max_{x\in\mathbb{R}}|g(x)|.
$$
Similarly, for a function $g$, defined on the interval $[a,b]$, we denote  $\|g\|_{[a,b]}:={\rm esssup}_{x\in[a,b]}|g(x)|$, and if $g\in C[a,b]$, then $\|g\|_{[a,b]}=\max_{x\in[a,b]}|g(x)|$.

Let ${\mathcal T}_n$ be the space of trigonometric  polynomials
$$
T_n(t)=\alpha_0+\sum_{k=1}^n(\alpha_k\cos kt+\beta_k\sin kt),\quad \alpha_k\in \mathbb{R},\ \beta_k\in \mathbb{R},
$$
of degree $\le n$ (of order $2n+1$) and, for $2\pi$-periodic function $g\in C(\mathbb{R})$, let
$$
E_n(g):=\inf_{T_n\in{\mathcal T}_n}\|g-T_n\|,
$$
denote the error of the best approximation of the function $g$. If $g\in\Delta^{(q)}(Y_s)$, then we would like to approximate it by trigonometric polynomials that change their $q$-monotonicity together with $g$, namely, are in $\Delta^{(q)}(Y_s)$. We call it co-$q$-monotone approximation. Denote by
$$
E_n^{(q)}(g,Y_s):=\inf_{T_n\in{\mathcal T}_n\cap\Delta^{(q)}(Y_s)}\|g-T_n\|,
$$
the error of the best co-$q$-monotone approximation of the function $g$.

It is well known that if $f\in\Delta^{(1)}(Y_s)\cap W^r$, $r\ge1$, then for $q=1$,
\begin{equation}\label{1}
E_n^{(q)}(f,Y_s)=O(1/n^r),\quad n\to\infty,
\end{equation}
(see, e.g., \cite{D,{LZ},P} for details and references).

It is also known \cite{Z} that \eqref{1}, with $q=2$, is valid for $f\in\Delta^{(2)}(Y_s)\cap W^r$, when $r\le3$. We believe it is true for all $r\ge1$.

It turns out, and proving this is the main purpose of this article, that for $q\ge 3$, \eqref{1} is, in general, invalid for any $r,s\in\mathbb N$ and every $Y_s\in\mathbb Y_s$.

Our main result is
\begin{theorem}\label{1.1} For each $q\ge 3$, $r\in\mathbb{N}$, $s\in\mathbb{N}$ and any $Y_s\in\mathbb{Y}_s$, there exists a function $f\in\Delta^{(q)}(Y_s)\cap W^r$, such that
$$
\limsup_{n\to\infty}n^rE_n^{(q)}(f,Y_s)=\infty.
$$
\end{theorem}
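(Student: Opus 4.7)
The plan is to build explicitly a function $f\in\Delta^{(q)}(Y_s)\cap W^r$ that violates the Jackson estimate. The construction concentrates the obstruction at the sign-change points $y_i$. I would choose scales $\delta_k\downarrow 0$ and rapidly increasing degrees $n_k\uparrow\infty$, and set $f=\sum_k\varepsilon_k f_k$, where each $f_k$ is a smooth $2\pi$-periodic function in $\Delta^{(q)}(Y_s)$ whose $q$-th derivative is supported in $\delta_k$-neighborhoods of the points of $Y_s$, carries the sign alternation dictated by $\prod_{i=1}^{2s}(t-y_i)$, and has amplitude calibrated so that $\|f_k^{(r)}\|$ stays uniformly bounded. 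The weights $\varepsilon_k$ are then chosen so that $f\in W^r$ while the $k$-th summand remains the dominant obstruction at degree $n_k$.

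Suppose, toward a contradiction, that $\sup_n n^rE_n^{(q)}(f,Y_s)<\infty$, and let $T_n\in\mathcal T_n\cap\Delta^{(q)}(Y_s)$ satisfy $\|f-T_n\|\le Cn^{-r}$ for all large $n$. The shape constraint $T_n^{(q)}(t)\prod_{i=1}^{2s}(t-y_i)\ge 0$ on $[y_{2s},y_0]$ forces each $y_i$ to be a zero of odd multiplicity of the degree-$n$ trigonometric polynomial $T_n^{(q)}$, so one may write
$$
T_n^{(q)}(t)=P_n(t)\prod_{i=1}^{2s}\sin\tfrac{t-y_i}{2},
$$
where $P_n$ is a nonnegative trigonometric polynomial of degree at most $n-s$. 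This is the rigid structural fact that should be exploited, together with Bernstein--Markov estimates for nonnegative trigonometric polynomials.

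The main part of the argument, and the real obstacle, is converting this sign rigidity into a quantitative lower bound for $\|f-T_n\|$. My strategy is to pair $f-T_n$ against a carefully chosen test function $\psi_n$ localized near one of the $y_i$ whose $q$-fold antiderivative is small on the relevant scale. After $q$ integrations by parts, the contribution from $T_n$ becomes $\int T_n^{(q)}\psi_n$, which the nonnegativity of $P_n$ forces to be of a definite sign and of size at least a fixed constant times a power of $\delta_{k(n)}$ when $n$ lies in the range $n\sim n_k$; the corresponding integral of $f^{(q)}$ against the same $\psi_n$ is made much smaller by the localization of $f^{(q)}$ built into the construction. The two cannot be reconciled within an error of $Cn^{-r}$ once the scales are tuned so that $n_k^r\delta_k^q\to\infty$, yielding the contradiction. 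The reason the same argument does not apply for $q=1,2$ is that there the shape constraint binds $T_n$ itself or $T_n'$ directly to the sign data, leaving no extra differentiations to amplify the oscillation cost, and the Jackson rate survives.
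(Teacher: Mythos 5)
Your overall strategy --- localize an obstruction at the sign-change points, superpose copies at decreasing scales, and derive a contradiction from the shape constraint on $T_n^{(q)}$ --- is broadly in the spirit of the paper, and your observation that $T_n^{(q)}$ factors as a nonnegative trigonometric polynomial $P_n$ of degree at most $n-s$ times $\prod_{i}\sin\frac{t-y_i}{2}$ is correct and a reasonable structural starting point. However, the crucial quantitative step is not justified, and as written it is false. You claim that the nonnegativity of $P_n$ forces $\int T_n^{(q)}\psi_n$ to be of a definite sign \emph{and} bounded below in magnitude by a fixed constant times a power of $\delta_{k(n)}$. Nonnegativity yields only the sign; it gives no lower bound on the magnitude. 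For example, $T_n$ may be a constant, so $T_n^{(q)}\equiv 0$ and $\int T_n^{(q)}\psi_n=0$, yet $T_n$ is admissible; more generally $P_n$ may vanish to arbitrarily high (even) order at $y_i$, or be tiny on the support of $\psi_n$. Since you also arrange $\int f^{(q)}\psi_n$ to be small, both pieces of $\int(f^{(q)}-T_n^{(q)})\psi_n$ can be small simultaneously, and the duality pairing then produces no contradiction with $\|f-T_n\|\le Cn^{-r}$. The lower bound you need cannot come from sign information about $T_n^{(q)}$ alone; it must come from the interaction between the corner singularity built into $f$ and the rigidity of the admissible class, and that is precisely where the hard analysis lives.

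The paper does not use duality. It takes $f$ to be the Eulerian-type ideal spline $\mathcal{E}_{q-2,b}$ (or $\mathcal{E}_{q-1,b}$, or a sum of smoothed versions $\mathcal{E}_{r,d,\lambda}$, depending on $r$), whose $r$th derivative is a two-sided sign jump at the two nearest sign-change points, reduces to a local problem of approximating $F_r(x)=|x|x^{r-1}/r!$ plus a polynomial on $[-b,b]$, and then invokes two concrete analytic lower bounds: Lemma~\ref{2.2} (from \cite{LS}), which says no function with $(q-2)$nd derivative convex on $[0,1]$ and concave on $[-1,0]$ can approximate $F_{q-2}$ on $[-1,1]$ within a fixed positive constant; and the trigonometric Bernstein-type inequality $\|F_1+l-T_n\|_{[-b,b]}\ge c_1 b/n$ of Corollary~\ref{modd}, whose proof rests on Privalov's localization of the Bernstein--de la Vall\'ee Poussin inequality and a careful analysis of $T_n(t)/\sin t$, not on sign or factorization considerations. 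These are combined with the derivative-comparison inequality of Lemma~\ref{3.111} to climb from the $(q-2)$nd or $(q-1)$st derivative back to $f$ itself. The proof of Theorem~\ref{1.1} also necessarily splits into three regimes, $r\le q-2$, $r=q-1$, and $r\ge q$, handled respectively by Theorem~\ref{1.2} (with Corollary~\ref{1.0}), Theorem~\ref{1.3}, and Theorem~\ref{z}; your sketch treats all $r$ uniformly, which obscures where and how the $n^{-r}$ saving becomes incompatible with the shape constraint. If you want to pursue a duality route, you would at a minimum need to replace the false magnitude lower bound on $\int T_n^{(q)}\psi_n$ by a genuine quantitative rigidity statement for the admissible class, which in effect amounts to reproving Lemma~\ref{2.2} and Corollary~\ref{modd}.
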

We will also prove the following less general but more precise statements.
\begin{theorem}\label{1.2}
For each $q\ge3$, $s\in\mathbb{N}$ and any $Y_s\in\mathbb{Y}_s$, there exists a function $f\in\Delta^{(q)}(Y_s)\cap W^{q-2}$, such that
\begin{equation}\label{2}
E_n^{(q)}(f,Y_s)\ge C(q,Y_s), \quad n\in\mathbb{N},
\end{equation}
where $C(q,Y_s)>0$ depends only on $q$ and $Y_s$.
\end{theorem}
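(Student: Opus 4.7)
My plan is a duality argument. I will construct $f\in\Delta^{(q)}(Y_s)\cap W^{q-2}$ whose $(q-2)$-th derivative has a jump at $y_1$, together with a fixed test function $\Pi$ such that the co-$q$-monotone condition on $T_n$ forces $(-1)^q\int T_n\Pi^{(q)}\,dt\ge0$, while the jump of $f^{(q-2)}$ at $y_1$ forces $(-1)^q\int f\Pi^{(q)}\,dt$ to be strictly negative. H\"older's inequality will then turn the gap between these two integrals into a constant lower bound for $\|f-T_n\|$.

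For $f$, I take $f^{(q-2)}$ to be the step function equal to a constant $c_i$ on each open arc $(y_i,y_{i-1})$. A constant is (trivially) convex, so the alternating convexity required by $f\in\Delta^{(q)}(Y_s)$ is automatic for any choice of $c_i$, and scaling so that $\max|c_i|\le 2$ places $f$ in $W^{q-2}$. Making $f$ genuinely $2\pi$-periodic reduces to the single linear condition $\sum_{i=1}^{2s}c_i(y_{i-1}-y_i)=0$, with the intermediate constants of integration used to restore periodicity of the derivatives $f^{(q-4)},\ldots,f^{(0)}$ one level at a time; subject to this single relation I pick $c_i$'s with $c_2-c_1<0$. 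For $\Pi$, choose a $C^\infty$ bump $\phi\ge0$ supported in a small neighborhood of $y_1$ that excludes every other $y_j$ and is strictly interior to $[y_{2s},y_0]$, with $\phi(y_1)=1$, and set
$$\Pi(t):=\phi(t)\prod_{i=1}^{2s}(t-y_i),$$
extended by $2\pi$-periodicity. A direct calculation, using the support of $\phi$ and the factor $(t-y_j)$ in the product, yields $\Pi'(y_j)=0$ for every $j\ne1$ and $\Pi'(y_1)=\prod_{i\ne1}(y_1-y_i)>0$.

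For any $T_n\in\mathcal{T}_n\cap\Delta^{(q)}(Y_s)$, the pointwise inequality $T_n^{(q)}(t)\prod_i(t-y_i)\ge0$ together with $\phi\ge0$ gives $\int T_n^{(q)}\Pi\,dt\ge0$, and $q$ integrations by parts (all boundary terms vanish by the compact support of $\phi$ in one period) yield $(-1)^q\int T_n\Pi^{(q)}\,dt\ge0$. On the other hand, since $f\in AC^{q-3}$ with $f^{(q-2)}\in L^\infty$, $q-2$ integrations by parts followed by the explicit step-function computation give
$$\int f\,\Pi^{(q)}\,dt=(-1)^q\int f^{(q-2)}\Pi''\,dt=(-1)^q(c_2-c_1)\Pi'(y_1).$$
Since $c_2-c_1<0$, the quantities $(-1)^q\int T_n\Pi^{(q)}\,dt$ and $(-1)^q\int f\Pi^{(q)}\,dt$ have opposite signs, so $\bigl|\int(f-T_n)\Pi^{(q)}\,dt\bigr|\ge|c_2-c_1|\,\Pi'(y_1)$, and H\"older then produces
$$\|f-T_n\|\ge\frac{|c_2-c_1|\,\prod_{i\ne1}(y_1-y_i)}{\|\Pi^{(q)}\|_{L^1}}=:C(q,Y_s)>0,$$
uniformly in $n$ and in the choice of $T_n$.

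The only delicate design choice is $\phi$: its support must sit strictly inside one fundamental period, so that all boundary terms in the $q$-fold integration by parts truly vanish, and it must isolate $y_1$ from the other $y_j$, so that the functional sees only the jump of $f^{(q-2)}$ at $y_1$. Once $\phi$, and hence $\Pi$, is in place, the rest is routine bookkeeping.
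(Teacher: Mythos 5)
Your duality argument is correct and it proves Theorem \ref{1.2}, but it is a genuinely different route from the paper's. Let me compare.

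\emph{What you do.} You build a co-$q$-monotone function $f$ whose $(q-2)$-nd derivative is a piecewise constant (with mean zero for periodicity), and then construct a fixed, smooth, compactly supported test function $\Pi=\phi\cdot\prod_i(\cdot-y_i)$ localized near $y_1$. The sign condition $T_n^{(q)}(t)\prod_i(t-y_i)\ge0$, together with $\phi\ge0$, gives $\int T_n^{(q)}\Pi\ge0$, and $q$ integrations by parts (justified because $\Pi$ is $C^\infty$ and vanishes identically near the period endpoints) turn this into $(-1)^q\int T_n\Pi^{(q)}\ge0$. On the other side, $q-2$ integrations by parts on $f$ isolate the jump of $f^{(q-2)}$ at $y_1$ and give $(-1)^q\int f\Pi^{(q)}=(c_2-c_1)\Pi'(y_1)<0$. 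The separation plus H\"older gives a constant lower bound for $\|f-T_n\|$ that depends only on $q$ and $Y_s$, uniformly in $n$. This is a Hahn--Banach-type argument: $f$ lies strictly outside the half-space containing the cone $\mathcal T_n\cap\Delta^{(q)}(Y_s)$, and the fixed functional $\Pi^{(q)}$ witnesses the separation. All the details (the single linear constraint on the $c_i$ for periodicity, the computation $\Pi'(y_1)=\prod_{i\ne1}(y_1-y_i)>0$, and the vanishing $\Pi'(y_j)=0$ for $j\ne1$) check out.

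\emph{What the paper does.} The paper takes $f=\Eps_{q-2,b}$, an Eulerian-type ideal spline whose $(q-2)$-nd derivative is a two-step function jumping at the two closest constraint points (after a shift), writes $\Eps_{q-2,b}=F_{q-2}+p_{q-2,b}$ locally, rescales $[-b,b]$ to $[-1,1]$, and then invokes Lemma \ref{2.2} (imported from \cite{LS}): any function $g$ with $g^{(q-2)}$ of the \emph{opposite} convexity pattern satisfies $\|F_{q-2}-g\|_{[-1,1]}\ge c$. Thus the crux of the paper's argument is a hard-approximation lemma for the specific function $F_{q-2}$ that is proved elsewhere.

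\emph{Comparison.} The two constructions of $f$ are in the same family (piecewise constant $(q-2)$-nd derivative with at least one nonzero jump); the paper's is the special case with two jumps. The essential difference is the mechanism for the lower bound. The paper reduces to Lemma \ref{2.2} from \cite{LS}, which encapsulates the obstruction; your approach re-derives the obstruction directly and self-containedly via a dual test functional, avoiding the citation entirely and, arguably, making the geometry of the obstruction (separation from a convex cone) more transparent. The paper's route has the advantage of connecting Theorem \ref{1.2} to the $\Eps_{r,b}$ machinery that it reuses in the later theorems; your route is more elementary for the statement at hand but does not by itself feed into the Jackson-rate refinements of Sections 3--6.
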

\begin{corollary}\label{1.0}
For each  $q\ge3$, $r\le q-2, s\in\mathbb{N}$ and any $Y_s\in\mathbb{Y}_s$, there exists a function $f\in\Delta^{(q)}(Y_s)\cap W^r$, such that
\[
E_n^{(q)}(f,Y_s)\ge C(q,Y_s), \quad n\in\mathbb{N},
\]
where $C(q,Y_s)>0$ depends only on $q$ and $Y_s$.
\end{corollary}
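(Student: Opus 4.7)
The plan is to derive Corollary~\ref{1.0} directly from Theorem~\ref{1.2} by a rescaling that trades smoothness against a multiplicative constant. Let $f_0$ denote the function produced by Theorem~\ref{1.2}, so that $f_0\in\Delta^{(q)}(Y_s)\cap W^{q-2}$ and $E_n^{(q)}(f_0,Y_s)\ge C(q,Y_s)$ for every $n\in\mathbb N$. If $r=q-2$ there is nothing to prove, so assume $r<q-2$. The only reason $f_0$ may fail to lie in $W^r$ is that $\|f_0^{(r)}\|$ could exceed the prescribed bound $2$; this can be repaired by dividing $f_0$ by a suitable constant depending only on $q$ and $r$.

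The key intermediate estimate is the Poincar\'e-type inequality $\|f_0^{(k)}\|\le\pi\|f_0^{(k+1)}\|$ for each $1\le k\le q-3$. Indeed, since $f_0\in AC^{(q-3)}$, $f_0^{(k)}$ is continuous and $2\pi$-periodic, and
\[
\int_{-\pi}^{\pi} f_0^{(k)}(t)\,dt=f_0^{(k-1)}(\pi)-f_0^{(k-1)}(-\pi)=0,
\]
so $f_0^{(k)}$ vanishes at some $x_k$ by the intermediate value theorem. Using $2\pi$-periodicity we may, for any $x$, shift $x$ by a multiple of $2\pi$ so that $|x-x_k|\le\pi$; then $f_0^{(k)}(x)=\int_{x_k}^x f_0^{(k+1)}(t)\,dt$ yields $|f_0^{(k)}(x)|\le\pi\|f_0^{(k+1)}\|$. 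Iterating from $k=r$ up to $k=q-3$ gives
\[
\|f_0^{(r)}\|\le\pi^{q-2-r}\|f_0^{(q-2)}\|\le 2\pi^{q-2-r}.
\]

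With this bound in hand I would set $\tilde f:=\pi^{-(q-2-r)}f_0$. Because positive scaling preserves the convexity of $(-1)^{i-1}f_0^{(q-2)}$ on each $[y_i,y_{i-1}]$, $\tilde f\in\Delta^{(q)}(Y_s)$; because $\tilde f\in AC^{(r-1)}$ (since $r-1\le q-3$) and $\|\tilde f^{(r)}\|\le 2$, we have $\tilde f\in W^r$. Finally, multiplication by a positive constant is a bijection on $\mathcal T_n\cap\Delta^{(q)}(Y_s)$, so the approximation error scales linearly:
\[
E_n^{(q)}(\tilde f,Y_s)=\pi^{-(q-2-r)}E_n^{(q)}(f_0,Y_s)\ge\pi^{-(q-2-r)}C(q,Y_s)=:\tilde C(q,Y_s)>0.
\]
There is no substantive obstacle in the argument: all of the hard work of constructing a counterexample is already done by Theorem~\ref{1.2}, and the corollary follows by a routine rescaling that matches the normalization of $W^r$ when $r<q-2$.
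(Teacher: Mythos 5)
Your argument is correct and is essentially the (unwritten) route the paper intends: deduce the corollary from Theorem~\ref{1.2} by a normalization. You also address a point the paper leaves implicit: since $W^r$ carries the normalization $\|f^{(r)}\|\le 2$, the inclusion $W^{q-2}\subset W^r$ does not hold automatically, so the function $f_0$ of Theorem~\ref{1.2} must be rescaled to land in $W^r$ when $r<q-2$. Your Poincar\'e-type iteration, which uses that each $f_0^{(k)}$ ($1\le k\le q-3$) is continuous, $2\pi$-periodic and of mean zero, gives $\|f_0^{(r)}\|\le\pi^{q-2-r}\|f_0^{(q-2)}\|\le2\pi^{q-2-r}$, and the rescaling $\tilde f=\pi^{-(q-2-r)}f_0$ then lies in $W^r\cap\Delta^{(q)}(Y_s)$ (the cone $\Delta^{(q)}(Y_s)$ and the space $\mathcal T_n$ are both stable under multiplication by positive constants, so $E_n^{(q)}(\alpha f,Y_s)=\alpha E_n^{(q)}(f,Y_s)$). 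The one small loose end: your resulting constant $\pi^{-(q-2-r)}C(q,Y_s)$ still depends on $r$, whereas the corollary asserts a constant depending only on $q$ and $Y_s$. This is harmless -- since $1\le r\le q-2$, replace it by the uniform lower bound $\pi^{-(q-3)}C(q,Y_s)$ -- but the remark should be made to match the stated conclusion.
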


\begin{theorem}\label{1.3}
For each $q\ge3$, $s\in\mathbb{N}$ and any $Y_s\in\mathbb{Y}_s$, there exists a function $f\in\Delta^{(q)}(Y_s)\cap W^{q-1}$, such that
\begin{equation}\label{3}
nE_n^{(q)}(f,Y_s)\ge C(q,Y_s), \quad n\in\mathbb{N},
\end{equation}
where $C(q,Y_s)>0$ depends only on $q$ and $Y_s$.
\end{theorem}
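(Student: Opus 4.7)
The plan is to construct an explicit extremal function $f\in\Delta^{(q)}(Y_s)\cap W^{q-1}$ and then deduce the lower bound $E_n^{(q)}(f,Y_s)\ge C/n$ via an integration-by-parts duality argument that exploits the zeros imposed on $T_n^{(q)}$ by co-$q$-monotonicity.

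\textbf{Construction of $f$.} Let $\phi$ be the continuous $2\pi$-periodic function, piecewise linear on each $[y_i,y_{i-1}]$, with $\phi(y_i)=(-1)^i$. Then $\|\phi\|=1$, and on $[y_i,y_{i-1}]$ the function $\phi$ is linear and monotone (nondecreasing if $i$ is odd, nonincreasing if $i$ is even), with zero mean. Setting $f^{(q-1)}:=2\phi$ and antidifferentiating $q-1$ times while choosing the constants of integration to keep the result $2\pi$-periodic, one obtains $f$ with $\|f^{(q-1)}\|=2$, so $f\in W^{q-1}$. Moreover $f^{(q-2)}$ is alternately convex and concave on the intervals $[y_i,y_{i-1}]$, so $f\in\Delta^{(q)}(Y_s)$. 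The function $f^{(q)}$ is piecewise constant on $\bigcup_i(y_i,y_{i-1})$, with $f^{(q)}(t)\prod_j(t-y_j)>0$ there and $|f^{(q)}|$ bounded below by a positive constant depending only on $Y_s$.

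\textbf{Duality reduction.} Fix $T_n\in\mathcal T_n\cap\Delta^{(q)}(Y_s)$. Since $T_n^{(q)}$ is a trigonometric polynomial of degree $\le n$ satisfying $T_n^{(q)}(t)\prod_j(t-y_j)\ge 0$ and the product has simple zeros, necessarily $T_n^{(q)}(y_i)=0$ for each $i$, and $T_n^{(q)}$ shares the sign pattern of $f^{(q)}$ on each $(y_i,y_{i-1})$. For any smooth $2\pi$-periodic test function $\Phi$, integration by parts yields
\[
\int_{-\pi}^{\pi}\bigl(f^{(q)}(t)-T_n^{(q)}(t)\bigr)\Phi(t)\,dt=(-1)^q\int_{-\pi}^{\pi}(f-T_n)(t)\,\Phi^{(q)}(t)\,dt,
\]
and therefore
\[
\|f-T_n\|\ \ge\ \frac{\bigl|\int_{-\pi}^{\pi}(f^{(q)}-T_n^{(q)})\Phi\,dt\bigr|}{\int_{-\pi}^{\pi}|\Phi^{(q)}(t)|\,dt}.
\]
It therefore suffices to construct $\Phi=\Phi_n$ with $\int_{-\pi}^{\pi}|\Phi^{(q)}|\,dt\le Kn$ and $\int_{-\pi}^{\pi}(f^{(q)}-T_n^{(q)})\Phi\,dt\ge c>0$ uniformly in admissible $T_n$, with $c$ and $K$ depending only on $q$ and $Y_s$; this will yield $\|f-T_n\|\ge c/(Kn)$ and hence the theorem.

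\textbf{Choice of $\Phi$ and the main obstacle.} We take $\Phi$ to have the same sign pattern as $\prod_j(t-y_j)$, so that both $\int f^{(q)}\Phi\ge 0$ and $\int T_n^{(q)}\Phi\ge 0$; the aim is to exhibit a definite positive gap between them. The factorization $T_n^{(q)}(t)=\bigl(\prod_j\sin((t-y_j)/2)\bigr)R_n(t)$ with $R_n$ a nonnegative trigonometric polynomial of degree $\le n-s$, together with Bernstein-type bounds on $R_n$, gives pointwise control of $|T_n^{(q)}|$ near each $y_i$. The main obstacle is then to design $\Phi$ realizing simultaneously (i) the required sign pattern; (ii) $\int|\Phi^{(q)}|\le Kn$; and (iii) the uniform bound $\int T_n^{(q)}\Phi\le\tfrac12\int f^{(q)}\Phi$. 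This is achieved by localizing $\Phi$ near the $y_i$ at a scale chosen to balance the vanishing of $T_n^{(q)}$ at these points (which makes $\int T_n^{(q)}\Phi$ small) against the resulting growth of $\int|\Phi^{(q)}|$ coming from Bernstein's inequality for $\Phi$. The balancing dictates the scale, and the $O(n)$ bound on $\int|\Phi^{(q)}|$ (rather than $O(1)$, as would suffice for Theorem \ref{1.2}) produces the factor $1/n$ in place of the constant lower bound obtained there.
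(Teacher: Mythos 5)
Your construction of $f$ (a $W^{q-1}$ function whose $(q-1)$st derivative is twice a unit sawtooth with nodes at the $y_i$) is a legitimate member of $\Delta^{(q)}(Y_s)\cap W^{q-1}$, and the observation that co-$q$-monotonicity forces $T_n^{(q)}(y_i)=0$ is correct. The duality inequality
$\|f-T_n\|\ge\bigl|\int(f^{(q)}-T_n^{(q)})\Phi\bigr|/\int|\Phi^{(q)}|$
is also correct. So the outline is coherent up to the point you yourself label ``the main obstacle.'' That obstacle, however, is not resolved in the proposal, and I do not believe it can be resolved along the lines you sketch.

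The difficulty is that for a fixed test function $\Phi$, the quantity $\int T_n^{(q)}\Phi$ is \emph{not} uniformly bounded over $T_n\in\mathcal T_n\cap\Delta^{(q)}(Y_s)$: the constraint set is a convex cone, and adding to $T_n$ an arbitrarily large co-$q$-monotone polynomial $g_n$ makes $\int T_n^{(q)}\Phi$ arbitrarily large. Consequently there is always an admissible $T_n$ with $\int T_n^{(q)}\Phi=\int f^{(q)}\Phi$ exactly, for which the duality bound is vacuous. You might try to repair this by first assuming $\|f-T_n\|$ small (else there is nothing to prove) so that Bernstein controls $\|T_n^{(q+1)}\|\lesssim n^{q+1}$ and hence $|T_n^{(q)}(t)|\lesssim n^{q+1}|t-y_i|$ near $y_i$, but the scaling then fails: with $\Phi$ localized at scale $\delta$ and height $h$, one needs $h\delta\gtrsim1$ (so $\int f^{(q)}\Phi\gtrsim1$) and $h\delta^{1-q}\lesssim n$ (so $\int|\Phi^{(q)}|\lesssim n$), forcing $\delta\gtrsim n^{-1/q}$; but then $\int T_n^{(q)}\Phi\lesssim h\delta^2 n^{q+1}=\delta\,n^{q+1}\gg1$, far larger than $\int f^{(q)}\Phi$. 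So condition (iii) cannot be met, and the factorization $T_n^{(q)}=\prod_j\sin((t-y_j)/2)\,R_n$ does not help since there is no a priori bound on $R_n$.

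The paper's proof goes a quite different route: it is local rather than dual. After shifting so that $y_{2s}=-b$, $y_{2s-1}=0$ with $b$ the minimal gap, the extremal function is an Eulerian-type ideal spline $\Eps_{q-1,b}$, which on $[-b,2\pi-b]$ equals $F_{q-1}(x)+p_{q-1,b}(x)$ with $F_{q-1}(x)=|x|x^{q-2}/(q-1)!$. The constraint $tT_n^{(q)}(t)\ge0$ on $[-b,b]$ is exploited through a Markov-type inverse inequality (Lemma~\ref{3.111}): a function whose $(q-2)$nd derivative is convex on $[0,2b]$ and concave on $[-2b,0]$ satisfies $b^{q-2}\|g^{(q-2)}\|_{[-b,b]}\le c\|g\|_{[-2b,2b]}$. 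Applied to $g=T_n-F_{q-1}-p_{q-1,b}$, this lifts a lower bound on the $(q-2)$nd derivative of the error to the error itself; the lower bound at the bottom level is the Bernstein-type estimate $\|F_1+l-T_n\|_{[-b,b]}\ge c_1b/n$ for approximating $|x|$ plus a linear function by trigonometric polynomials on a subinterval (Lemma~\ref{mod}, Corollary~\ref{modd}). The whole point is that the shape constraint is converted into the convexity hypothesis of an inverse inequality, which is a much stronger use of co-$q$-monotonicity than the vanishing-plus-sign-pattern information you extract. To close your argument you would need a comparable mechanism; the duality pairing alone does not provide one.
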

Our final result is

\begin{theorem}\label{z} Let $q\ge3$, $p\ge q$, $s\in\mathbb{N}$ and $Y_s\in\mathbb{Y}_s$. For each sequence $\{\varepsilon_n\}_{n=1}^\infty$ of positive numbers, tending to infinity, there is a function
$f\in\Delta^{(q)}(Y_s)\cap W^p$, such that
$$
\limsup_{n\to\infty}\varepsilon_n n^{p-q+2}E_n^{(q)}(f,Y_s)=\infty.
$$
\end{theorem}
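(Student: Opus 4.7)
The plan is to prove Theorem~\ref{z} by combining a scaled, localized version of the obstruction from Theorem~\ref{1.2} with a lacunary superposition. Given $p\ge q$, $s$, $Y_s\in\mathbb{Y}_s$, and a sequence $\varepsilon_n\to\infty$, I aim to construct $f$ as a series $f(x)=\sum_{k\ge 1} a_k\phi_k(x)$ with each $\phi_k\in\Delta^{(q)}(Y_s)$ a shape-preserving ``bump'' of scale $h_k\to 0$ concentrated near a single change point, say $y_1$. The weights $a_k$ will be chosen so that $f\in W^p$, the $k$-th bump is the dominant obstruction at a prescribed degree $n_k\asymp 1/h_k$, and the subsequence $\{n_k\}$ is tuned to the given $\varepsilon_n$ so that $\varepsilon_{n_k}n_k^{p-q+2}E_{n_k}^{(q)}(f,Y_s)\to\infty$.

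The engine of the proof is a rescaled local lower bound. For a suitable family of shape-preserving bumps $\phi_h$ with $\phi_h\in\Delta^{(q)}(Y_s)$, $\|\phi_h^{(p)}\|\le 1$, and $\mathrm{supp}\,\phi_h\subset[y_1-h,y_1+h]$, the target is
\[
E_n^{(q)}(\phi_h,Y_s)\ge c\,h^{p-q+2}\qquad\text{whenever }nh\le c_0,
\]
with $c,c_0>0$ depending only on $q,Y_s,p$. To establish it, I would rescale a near-best approximant $T_n\in\mathcal{T}_n\cap\Delta^{(q)}(Y_s)$ by setting $S(t):=T_n(y_1+ht)$ (suitably normalised); then $S$ is entire with Fourier content bounded by $nh\le c_0$, so by Bernstein's inequality it lies in a compact finite-dimensional cone on $[-1,1]$, while the shape constraint rescales to the single-point condition $S^{(q)}(t)\,t\ge 0$ (the other $y_j$ are at distance $\ge\delta_0:=\tfrac12\min_i(y_{i-1}-y_i)$ from the bump's support). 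A compactness argument on the low-frequency shape-preserving approximation of a fixed smooth profile $G$ --- the trigonometric-entire analogue of the unit-scale obstruction underlying Theorem~\ref{1.2} --- combined with Markov-type bookkeeping at the level of the $q$-th derivative should yield the bound.

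Given the local lower bound, the lacunary synthesis is routine. Choose $n_1<n_2<\cdots$ with $\varepsilon_{n_k}\ge 4^k$ and $n_{k+1}\ge 4n_k/c_0$, set $h_k:=c_0/n_k$, $a_k:=2^{-k}$, and put
\[
f(x):=\sum_{k=1}^\infty a_k\,\phi_{h_k}(x),
\]
renormalised so that $\|f^{(p)}\|\le 2$. The nested shrinking supports and the termwise pointwise sign condition yield $f\in\Delta^{(q)}(Y_s)\cap W^p$; the lacunarity ensures that at degree $n_k$ the contribution of $\phi_{h_j}$ with $j\ne k$ to any co-$q$-monotone approximant of $f$ is $o(a_k h_k^{p-q+2})$, so $E_{n_k}^{(q)}(f,Y_s)\ge\tfrac c2\,a_kh_k^{p-q+2}$, hence $\varepsilon_{n_k}n_k^{p-q+2}E_{n_k}^{(q)}(f,Y_s)\gtrsim \varepsilon_{n_k}a_k\ge 2^k\to\infty$.

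The hard part is the local lower bound. A naive compactness argument on the pullback $S$ yields only $E_n^{(q)}(\phi_h,Y_s)\gtrsim h^p$ (the order of $\|\phi_h\|$), and for $q\ge 3$ this is strictly weaker than the needed $h^{p-q+2}$ since $h^{p-q+2}>h^p$ for $h<1$. The improvement must come either from replacing the single bump by a multi-scale or wave-packet construction that inflates $\|\phi_h\|$ without inflating $\|\phi_h^{(p)}\|$, or from extracting a quantitative rigidity at the change point via $q$-th order divided differences together with an inverse Markov--Bernstein estimate --- the trigonometric counterpart of the Kopotun--Leviatan--Shevchuk mechanism responsible for the $n^{q-2}$ loss already visible in Theorems~\ref{1.2} and~\ref{1.3}. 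Reconciling the global nature of $\Delta^{(q)}(Y_s)$ (involving all $2s$ change points) with the local nature of the bumps is a secondary but nontrivial bookkeeping step.
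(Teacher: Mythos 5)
Your overall architecture — a lacunary sum of shape-preserving ``bumps'' concentrated near one change point, with scales tuned to the given sequence $\varepsilon_n$ — matches the paper. The gap is in the local lower bound, and it is not a matter of sharpening a compactness argument: the bound you want is \emph{false} for the bumps you describe. If $\phi_h\in\Delta^{(q)}(Y_s)$ with $\|\phi_h^{(p)}\|\le 1$ and $\mathrm{supp}\,\phi_h\subset[y_1-h,y_1+h]$, then since constants (in particular $0$) lie in $\mathcal T_n\cap\Delta^{(q)}(Y_s)$, one has the trivial upper bound
\[
E_n^{(q)}(\phi_h,Y_s)\le\|\phi_h\|\le c\,h^p,
\]
which for $q\ge3$ and $h<1$ is strictly smaller than the targeted $h^{p-q+2}$. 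With $h_k\asymp 1/n_k$ this decisively kills the construction, since then $\|\phi_{h_k}\|\lesssim n_k^{-p}$ while you need $E_{n_k}^{(q)}\gtrsim n_k^{-(p-q+2)}$. You notice the discrepancy (``a naive compactness argument\ldots yields only $h^p$'') but treat it as a limitation of the method rather than an unavoidable obstruction to this choice of bump; your suggested remedy (``inflates $\|\phi_h\|$ without inflating $\|\phi_h^{(p)}\|$'') is indeed the right direction, but it is precisely the nontrivial step that a compactly supported bump cannot achieve, and you do not carry it out.

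The paper resolves this by taking bumps that are \emph{not} localized to a window of length comparable to $1/n$. It uses a smoothed Eulerian-type spline $\Eps_{r,d,\lambda}$ (with $r=q-1$, $\lambda\asymp b^r/n$) that behaves like $|x|x^{r-1}/r!$ near the origin and is globally supported on the period; only its $(r+1)$-st and higher derivatives are concentrated at scale $\lambda$. After the rescaling $f_{n,b}\asymp\lambda^m\Eps_{r,d,\lambda}$ with $m=p-q+1$, one has $\|f_{n,b}^{(p)}\|\lesssim 1$ but $\|f_{n,b}\|\asymp\lambda^m\asymp n^{-(p-q+1)}$, comfortably above $n^{-(p-q+2)}$, so the lower bound is not vacuous. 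The lower bound itself does not come from a compactness/rigidity argument at all: it comes from an explicit trigonometric Bernstein-type estimate $\|F_1-T_n\|_{[-b,b]}\ge c_1 b/n$ for the absolute value (Lemma~\ref{mod}), combined with an inverse-type estimate (Lemma~\ref{3.111}) transferring the obstruction from the $(q-2)$-nd derivative down to the function, giving $n\|F_r+P_r-T_n\|_{[-b,b]}\ge c_3 b^r$ whenever $tT_n^{(r+1)}(t)\ge0$ (Lemma~\ref{3.2}), and finally a perturbation step (Lemma~\ref{5.3}) to pass from $F_r$ to the smoothed $\Eps_{r,d,\lambda}$. Without some substitute for this machinery, the local lower bound you need remains unproved, and no amount of lacunary bookkeeping fixes that.
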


We prove Theorem \ref{1.2} in Section 2, Theorem \ref{1.3} in Section 4 and Theorem \ref{z} in Section 6. In the proofs we apply ideas from \cite{LS}, and we have to overcome the constraints and challenges of periodicity.

In the sequel, positive constants $c$ and $c_i$ either are absolute or may depend only  on  $r$, $q$, $p$ and $m$.

\section{Eulerian type ideal splines and proof of Theorem  \ref{1.2}}

\begin{definition} For each $b\in(0,\pi]$ and $r\in \mathbb{N}$ denote by $\Eps_{r,b}$ the $2\pi$-periodic function,
 such that
\begin{itemize}
\item[1)] $\Eps_{r,b}\in C^{r-1}$,\\
\item[2)] $\ds\int_{-\pi}^\pi\Eps_{r,b}(x)dx=0,\quad\text{and}$\\
\item[3)] $\Eps_{r,b}^{(r)}=\sign x-\gamma_b,\quad x\in(-b,2\pi-b)\setminus\{0\},$
\end{itemize}
where
\begin{equation}\label{gamma}
\gamma_b=1-b/\pi,
\end{equation}
so that
$$
\int_{-\pi}^\pi\Eps_{r,b}^{(r)}(x)dx=0.
$$
\end{definition}
\begin{remark} By its definition, $\Eps_{r,b}$ is a spline of minimal defect of degree $r$, in particular
$\Eps_{r,\pi}$ is called an Eulerian ideal spline.
\end{remark}
Put
\[
F_r(x):=\frac{1}{r!}|x|x^{r-1}.
\]
The following properties of $\Eps_{r,b}$ readily follow from its definition.
\begin{equation}\label{p}
\Eps_{r,b}(x)=F_r(x)+p_{r,b}(x),\quad x\in[-b,2\pi-b],
\end{equation}
where $p_{r,b}$ is an algebraic polynomial of degree $\le r$;
\begin{equation}\label{77}
1\le\|\Eps_{r,b}^{(r)}\|<2,\quad\text{whence}\quad \Eps_{r,b}\in W^r,
\end{equation}
and, for each collection $Y_s$, such that $\{-b,0\}\in Y_s$, and every $q>r$, we have
\begin{equation}\label{79}
\Eps_{r,b}\in\Delta^{(q)}(Y_s).
\end{equation}
We need the following lemma (see \cite[Lemma 2.4]{LS}).
\begin{lemma}\label{2.2} For each $q\ge3$ and any function $g\in
C^{q-2}[-1,1]$, such that $g^{(q-2)}$ is convex on $[0,1]$ and concave on $[-1,0]$, 
 we have
\begin{equation}\label{ls2}
\|F_{q-2}-g\|_{[-1,1]}\ge c.
\end{equation}
\end{lemma}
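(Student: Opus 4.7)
The plan is to argue by contradiction: assume $\|F_{q-2}-g\|_{[-1,1]}<\varepsilon$ for $\varepsilon$ arbitrarily small and derive a $q$-dependent lower bound via a bounded linear functional $\Lambda$ on $C[-1,1]$ that exploits the asymmetric structure of $F_{q-2}$ at $0$. The structural key is that $F_{q-2}$ is a spline of exact degree $q-2$ with a single knot at $0$: on $[0,1]$ it equals the polynomial $x^{q-2}/(q-2)!$ and on $[-1,0]$ it equals $-x^{q-2}/(q-2)!$, so $F_{q-2}^{(q-2)}=\sign$ jumps by $2$ at $0$, whereas $g^{(q-2)}$ is continuous.

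A natural candidate is $\Lambda=\Delta_+-\Delta_-$, where $\Delta_+(h):=[h;u_0,\dots,u_{q-2}]$ and $\Delta_-(h):=[h;v_0,\dots,v_{q-2}]$ are $(q-2)$-nd divided differences at fixed nodes in $[0,1]$ and $[-1,0]$ respectively. Since $\Delta_\pm$ extract the leading coefficient of a polynomial of degree $q-2$, one gets $\Delta_+(F_{q-2})-\Delta_-(F_{q-2})=2/(q-2)!$, while the boundedness of divided differences gives $|(\Delta_+-\Delta_-)(g-F_{q-2})|\le C(q)\varepsilon$. By the Hermite--Genocchi formula, $\Delta_\pm(g)=\int g^{(q-2)}\,w_\pm\,dx$ against nonnegative weights $w_\pm$ supported in the respective halves with $\int w_\pm=1/(q-2)!$. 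The remaining task is to use the shape hypothesis to force $\Delta_+(g)-\Delta_-(g)\le 2/(q-2)!-c(q)$: I would cluster the nodes symmetrically near $0$ at scale $\delta$ and apply the chord upper bound $g^{(q-2)}(x)\le (1-x/\delta)g^{(q-2)}(0)+(x/\delta)g^{(q-2)}(\delta)$ on $[0,\delta]$ together with the reversed concave inequality on $[-\delta,0]$, and then use the common value $g^{(q-2)}(0)=\alpha$ (continuity at $0$) to link the two contributions and extract a definite deficit.

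The main obstacle is that the one-sided slopes of $g^{(q-2)}$ at $0$ are not a priori controlled by $\|F_{q-2}-g\|$, so the naive chord comparison leaks, and the blow-up of the divided-difference norm as $\delta\to 0$ must be absorbed. The remedy, as in \cite{LS}, is to iterate the extraction at shrinking clusters and to optimize $\delta$ as a $q$-dependent constant: convexity on $[0,1]$ and concavity on $[-1,0]$ together guarantee that any continuous $g^{(q-2)}$ must stay strictly below $\sign$ on a non-degenerate part of $[0,1]$ or strictly above it on a non-degenerate part of $[-1,0]$, producing the uniform constant $c>0$ in \eqref{ls2}.
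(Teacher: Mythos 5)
The paper does not prove Lemma \ref{2.2}; it is imported verbatim as \cite[Lemma 2.4]{LS}, so the comparison is with the argument in \cite{LS}, which you are in fact trying to reconstruct. Your plan correctly identifies all the right ingredients: the jump of $F_{q-2}^{(q-2)}=\sign$ at $0$ versus the continuity of $g^{(q-2)}$; divided differences as the bounded linear functional that sees this jump; the Hermite--Genocchi kernel representation; and the chord comparison furnished by convexity/concavity with $g^{(q-2)}(0)=\alpha$ as the pivot. That is the correct approach.

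However, the execution sketch has a gap and a misconception. You propose to ``cluster the nodes symmetrically near $0$ at scale $\delta$,'' then flag that the divided-difference norm blows up like $\delta^{-(q-2)}$, and finally gesture at ``iterating the extraction at shrinking clusters.'' This is not the remedy and would not close: the blow-up is real, and nothing in the shape hypothesis caps the one-sided slope of $g^{(q-2)}$ at $0$, so the ``leak'' you mention cannot be absorbed by shrinking $\delta$. The fix is the opposite of shrinking: keep all nodes at a \emph{fixed} macroscopic scale. Concretely, assume $\|F_{q-2}-g\|<\varepsilon$ and set $\alpha:=g^{(q-2)}(0)$, with (by the reflection $x\mapsto -x$) $\alpha\le 0$ WLOG. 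First take a $(q-2)$-nd divided difference of $g$ at nodes inside $[3/4,1]$: by the mean value theorem for divided differences it equals $g^{(q-2)}(\zeta)/(q-2)!$ for some $\zeta\in(3/4,1)$, and comparison with $F_{q-2}$ gives $g^{(q-2)}(\zeta)=1+O(\varepsilon)$. Convexity of $g^{(q-2)}$ on $[0,1]$ together with $\alpha\le 0$ then yields the chord bound $g^{(q-2)}(t)\le (t/\zeta)(1+O(\varepsilon))\le \tfrac{4}{3}t(1+O(\varepsilon))$ for $t\in[0,3/4]$. Now take the $(q-2)$-nd divided difference $D$ of $g$ at equally spaced nodes $0,\tfrac{3}{4(q-2)},\dots,\tfrac34$ and write it via Hermite--Genocchi as $\int_0^{3/4}g^{(q-2)}M$, where $M\ge 0$, $\int M=1/(q-2)!$, and $\int tM=\tfrac{3}{8(q-2)!}$ (the barycenter of the nodes is $3/8$). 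The chord bound gives $D\le \tfrac{4}{3}\cdot\tfrac{3}{8(q-2)!}+O(\varepsilon)=\tfrac{1}{2(q-2)!}+O(\varepsilon)$, while $D\ge D(F_{q-2})-C\varepsilon=\tfrac{1}{(q-2)!}-C\varepsilon$. Subtracting forces $\varepsilon\ge c(q)>0$. No shrinking clusters, no iteration, and no reliance on one-sided slopes at $0$: the single chord from $(0,\alpha)$ to $(\zeta,\approx 1)$ at fixed scale already does everything. Your final heuristic (``$g^{(q-2)}$ must stay strictly below/above $\sign$ on a non-degenerate part'') is really the conclusion, not a usable lemma; the non-degeneracy is exactly what the fixed-scale chord argument establishes, and you should not assert it as an input.
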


\begin{proof}[Proof of Theorem \ref{1.2}] Given $Y_s\in\mathbb Y_s$, let
$$
b:= \min_{1\le j\le2s}\{y_{j-1}-y_j\},
$$
and by shifting the periodic function $f$, we may assume, without loss of generality, that $y_{2s}=-b$ and $y_{2s-1}=0$. Obviously, it follows that $y_{2s-2}\ge b$.

We will show that $f:=\Eps_{q-2,b}$ is the desired function. Indeed, by (\ref{77}) and (\ref{79}),
 $\Eps_{q-2,b}\in\Delta^{(q)}(Y_s)\cap W^{q-2}$. So we have to prove \eqref{1.2}.

To this end we take an arbitrary polynomial $T_n\in\mathcal T_n\cap\Delta^{(q)}(Y_s)$.
Then the function $g_n:=T_n-p_{q-2,b}$ satisfies $xg_n^{(q)}(x)\ge0$ for $x\in[-b,b]$, whence $xg_n^{(q)}(x/b)\ge0$ for $x\in[-1,1]$. Let $\tilde F_{q-2}(x):=F_{q-2}(x/b)$, $\tilde g_{n}(x):=g_{n}(x/b)$. By Lemma \ref{2.2} we obtain,
\begin{align*}
\|f-T_n\|_{[-\pi,\pi]}&=\|F_{q-2}-g_n\|_{[-\pi,\pi]}\ge\|F_{q-2}-g_n\|_{[-b,b]}\\&=\|\tilde F_{q-2}-\tilde g_n\|_{[-1,1]}
=b^{2-q}\| F_{q-2}-b^{q-2}\tilde g_n\|_{[-1,1]}\\&\ge{b^{2-q}}c,
\end{align*}
which yields \eqref{1.2}.
\end{proof}

\section{Approximation of $|x|$}
Recall that
$$
F_1(x)\equiv|x|.
$$
In this Section we prove, for trigonometric polynomials, an analog of Bernstein's estimate
$$
\|F_1-P_n\|_{[-b,b]}\ge c\frac bn,
$$
which is valid for every algebraic polynomial $P_n$ of degree $\le n$ (for the exact constant $c$, see \cite{5}).

To this end, we first extend to an arbitrary interval $[-b,b]$ the
Bernstein -- de la Vall\'ee-Poussin inequality
\begin{equation}\label{BV}
\|T_n'\|\le n\|T_n\|,
\end{equation}
which is valid for every $T_n\in\mathcal T_n$.

Following Privalov (see \cite[p. 96-97]{Pr}), we prove
\begin{lemma}\label{deriv1} For each  $b\in(0,\pi)$ and every odd trigonometric polynomial $T_n\in\mathcal T_n$ there holds the inequality
\begin{equation} \label{dz1}
\|T_n'\|_{[-b/2,b/2]}\le  \frac{c_0}bn\|T_n\|_{[-b,b]},
\end{equation}
where $c_0<10$.
\end{lemma}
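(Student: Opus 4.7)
The plan is to reduce this trigonometric inequality to a weighted Bernstein--Markov estimate for algebraic polynomials using the Chebyshev substitution $u=\cos x$. Because $T_n$ is odd of degree $n$, one has the representation $T_n(x)=\sin x\cdot P_{n-1}(\cos x)$ where $P_{n-1}$ is an algebraic polynomial of degree $n-1$; differentiating gives
\[
T_n'(x)=\cos x\,P_{n-1}(\cos x)-\sin^{2}x\,P_{n-1}'(\cos x),
\]
and the hypothesis $\|T_n\|_{[-b,b]}\le M$ translates, under $u=\cos x$, into the Chebyshev-weighted bound
\[
\sqrt{1-u^{2}}\,|P_{n-1}(u)|\le M,\qquad u\in[\cos b,1].
\]
The target is to prove $|T_n'(x)|\le(c_{0}n/b)M$ for $|x|\le b/2$, i.e.\ for $\cos x\in[\cos(b/2),1]$.

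I would then control $|P_{n-1}|$ and $|P_{n-1}'|$ on $[\cos(b/2),1]$ in two stages. Away from the weight zero at $u=1$, Jordan's inequality gives $|\sin x|\ge b/\pi$ for $x\in[b/2,b]$, whence the weighted constraint immediately yields $\|P_{n-1}\|_{[\cos b,\cos(b/2)]}\le\pi M/b$. To push the control all the way to $u=1$, where the Chebyshev factor $\sqrt{1-u^{2}}$ vanishes, I would transfer the problem affinely from $[\cos b,1]$ to $[-1,1]$. Since the other factor $\sqrt{1+u}$ is bounded below by $\sqrt{2}\cos(b/2)>0$ on the transformed interval, the polynomial $\tilde P(v):=P_{n-1}(u(v))$ of degree $n-1$ satisfies the one-sided Jacobi estimate $(1-v)^{1/2}|\tilde P(v)|\le\sqrt{2}M/\sin b$, and the classical weighted Bernstein--Markov inequality for this Jacobi weight produces $\|P_{n-1}\|_{[\cos(b/2),1]}\le c\sqrt{n}\,M/\sin b$. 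Combined with the Jordan estimate $\sin b\ge 2b/\pi$ this is absorbed into $cnM/b$, and an analogous weighted Markov inequality for the derivative handles the term $\sin^{2}x\cdot|P_{n-1}'(\cos x)|$, yielding a bound of the same order.

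Substituting these pointwise estimates into the formula for $T_n'(x)$ and taking the supremum over $x\in[-b/2,b/2]$ gives the claimed inequality. The main difficulty will be securing the explicit absolute constant $c_{0}<10$: while the correct order $n/b$ emerges routinely from the weighted Bernstein--Markov inequalities, pinning down a numerical constant below $10$ requires careful bookkeeping through the affine change of variable, the lower bound for $\sqrt{1+u}$, the Jordan estimates relating $\sin b$ to $b$, and the sharp Jacobi constants. This numerical tracking --- rather than the qualitative order estimate --- is the real work, and it is presumably why the argument is written following Privalov's specific scheme on pp.~96--97 of \cite{Pr} rather than appealing to an abstract weighted polynomial inequality. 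A secondary subtlety is the regime $b$ near $\pi$, where Jordan's inequality becomes less efficient; there the global Bernstein--de la Vall\'ee-Poussin inequality \eqref{BV} (or a direct Remez-type argument exploiting the oddness of $T_n$ and the vanishing $T_n(\pm\pi)=0$) should furnish the required bound.
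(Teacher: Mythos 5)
Your reduction to algebraic polynomials via $T_n(t)=P_{n-1}(\cos t)\sin t$ is exactly the paper's starting point, but from that point on the route diverges. The paper follows Privalov's scheme and never splits $T_n'$ into two terms: it works with the single quantity $\varphi(x)\frac{d}{dx}\bigl(P_{n-1}(x)\varphi(x)\bigr)$, where $\varphi(x)=\sqrt{1-x^2}$, introduces the auxiliary weight $\varphi_h(x)=\sqrt{(1-x)(x-h)}$ with $h=\cos b$ (the affine image of $\varphi$ on $[h,1]$, which encodes the Bernstein--de la Vall\'ee-Poussin inequality restricted to $[-b,b]$), and then expands $(P_{n-1}\varphi)'$ in terms of $(P_{n-1}\varphi_h)'$ and the ratio $\varphi/\varphi_h$. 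All that remains is elementary trigonometric bookkeeping on the ratios $\varphi_h/\varphi$, $\varphi^2/\varphi_h^2$ and $\varphi/(x-h)$, which is how the explicit constant $c_0<10$ falls out so cleanly. Your plan instead differentiates the product, splits $T_n'(x)=\cos x\,P_{n-1}(\cos x)-\sin^2x\,P_{n-1}'(\cos x)$, and bounds the two terms separately through Jacobi-weighted polynomial inequalities; this is a genuinely different (more brute-force) decomposition, and while it can be made to give the correct order $n/b$, it buys you nothing over Privalov's trick and costs quite a lot in constant tracking.

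Two concrete problems with the plan as written. First, the claimed one-sided Jacobi estimate $\|P_{n-1}\|_{[\cos(b/2),1]}\le c\sqrt{n}\,M/\sin b$ is too strong: for the weight $(1-v)^{1/2}$ (indeed for $(1-v)^{\alpha}$ with $\alpha\ge0$) the Nikolskii-type bound is $\|\tilde P\|\le c\,n^{2\alpha}\|(1-v)^{\alpha}\tilde P\|$, so the exponent is $n$, not $\sqrt n$. That error is fortunately in the favourable direction (the true bound $n$ still yields order $n/b$), but it needs correcting. Second, and more seriously, ``an analogous weighted Markov inequality for the derivative'' is doing a lot of unexamined work. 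The naive Markov inequality on $[\cos b,1]$ (an interval of length $\approx b^2/2$) gives $\|P_{n-1}'\|\lesssim (n^2/b^2)\|P_{n-1}\|$, and combined with $\sin^2x\le b^2/4$ and $\|P_{n-1}\|\lesssim nM/b$ this produces a bound of order $n^3M/b$, much worse than the target $nM/b$. To rescue the argument you must use a weighted Bernstein--Markov inequality of the form $\|(1-v^2)\tilde P'\|\le cn\|\sqrt{1-v^2}\,\tilde P\|$ (itself obtained from \eqref{BV} together with $\|\tilde P\|\le c(n+1)\|\sqrt{1-v^2}\,\tilde P\|$), and then exploit that your target range $u\in[\cos(b/2),1]$ corresponds under the affine map to $v\ge v_0=\cos(b/2)/(1+\cos(b/2))\ge0$, so that $1+v\ge1$ and the unwanted $(1+v)$ factor is harmless. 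None of this is in the proposal, and it is precisely what the $\varphi_h$-device in the paper automates. Finally, as you note yourself, the lower bound $\sqrt{1+u}\ge\sqrt{2}\cos(b/2)$ and the estimate $\sin b\ge 2b/\pi$ both degenerate as $b\to\pi$, so your argument requires a separate patch for $b$ near $\pi$; the paper's bound $\varphi^2/\varphi_h^2\le\cot^2(b/4)<16/b^2$ does not degenerate and handles all $b\in(0,\pi)$ uniformly.
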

\begin{proof}
Let
$$
\varphi(x):=\sqrt{1-x^2}.
$$
Then by virtue of \eqref{BV}, for every algebraic polynomial, $P_{n-1}$, of degree $<n$, we have
\begin{equation}\label{deriv21}
|\varphi(x)(P_{n-1}(x)\varphi(x))'|\le n\|P_{n-1}\varphi\|_{[-1,1]},\quad x\in(-1,1),
\end{equation}
Indeed, given $P_{n-1}$, set $x:=\cos t$, and denote by
$$
\tau_n(t):=P_{n-1}(\cos t)\sin t,
$$
the odd trigonometric polynomial of degree $\le n$. Then
$$
\bigl|\frac d{dt}\tau_n(t)\bigr|=\bigl|\varphi(x)\frac d{dx}(P_{n-1}(x)\varphi(x))\bigr|.
$$
Thus, \eqref{BV} readily implies
\begin{align*}
\bigl|\varphi(x)\frac d{dx}(P_{n-1}(x)\varphi(x))\bigr|=\bigl|\frac d{dt}\tau_n(t)\bigr|\le n \|\tau_n\|_{[-\pi,\pi]}=n\|P_{n-1}\varphi\|_{[-1,1]}.
\end{align*}

For each $-1<h<1$, let
$$
\varphi_h(x):=\sqrt{(1-x)(x-h)}.
$$
Then the linear mapping of $[-1,1]$ onto $[h,1]$, turns \eqref{deriv21} into,
\begin{equation}\label{deriv3}
|\varphi_h(x)(P_{n-1}(x)\varphi_h(x))'|\le n\|P_{n-1}\varphi_h\|_{[h,1]},\quad x\in(h,1).
\end{equation}
Let $h:=\cos b$. Since, for $x\in[h,1)$,
$$
\frac{\varphi_h^2(x)}{\varphi^2(x)}=\frac{x-h}{1+x}\le\frac{1-h}2=\sin^2(b/2)\le\frac{b^2}4,
$$
that is
\begin{equation}\label{deriv4}
\left\|\varphi_h/\varphi\right\|_{[h,1]}\le b/2,
\end{equation}
and
$$
-\frac{\varphi_h(x)}{\varphi(x)}\frac d{dx}\left(\frac{\varphi(x)}{\varphi_h(x)}\right)=\frac12\frac{1+h}{1+x}\frac1{x-h} \le\frac12\frac1{x-h},
$$
so that \eqref{deriv3} implies, for all $x\in(h,1)$,
\begin{align}\label{88}
&|(P_{n-1}(x)\varphi(x))'|\\&\,\le|(P_{n-1}(x)\varphi_h(x))'|\frac{\varphi(x)}{\varphi_h(x)}
+\left|P_{n-1}(x)\varphi_h(x)\left(\frac{\varphi(x)}{\varphi_h(x)}\right)'\right|\nonumber\\
&\,\le \|P_{n-1}\varphi\|_{[h,1]}\left(\frac{nb}2\frac{\varphi(x)}{\varphi_h^2(x)}+\frac{\varphi_h(x)}{\varphi(x)}\left|\left(\frac{\varphi(x)}{\varphi_h(x)}\right)'\right|\right)\nonumber\\
&\,\le \frac12\|P_{n-1}\varphi\|_{[h,1]}\left({nb}\frac{\varphi(x)}{\varphi_h^2(x)}+\frac1{x-h}\right).\nonumber
\end{align}
Finally, if  $x\in[\cos(b/2),1)$, then
\begin{equation}\label{deriv6}
\frac{\varphi(x)}{x-h}\le\frac{\sin(b/2)}{\cos(b/2)-\cos b}=\frac{\cos(b/4)}{\sin(3b/4)}\le \cot(b/4)<\frac{ 4}{b},
\end{equation}
and
\begin{align}\label{deriv5}
\frac{\varphi^2(x)}{\varphi_h^2(x)}&=\frac{1+x}{x-h}\le\frac{1+\cos(b/2)}{\cos(b/2)-\cos b}\\&=\frac{\cos^2(b/4)}{\sin(b/4)\sin(3b/4)}\le \cot^2(b/4)
<\frac{16}{b^2}.\nonumber
\end{align}

We are ready to complete the proof. Given odd trigonometric polynomial $T_n\in\mathcal T_n$, denote by $P_{n-1}$ the algebraic polynomial of degree $<n$, such that
$$
T_n(t)\equiv P_{n-1}(\cos t)\sin t.
$$
For $t\in[-b/2,b/2]$ (thus, $x=\cos t\in[\cos(b/2),1]$), combining \eqref{88} through \eqref{deriv5}, we get
\begin{align*}
\bigl|\frac d{dt}T_n(t)\bigr|&=\bigl|\varphi(x)\frac d{dx}\bigl(P_{n-1}(x)\varphi(x)\bigr)\bigr|
\le\frac12\|P_{n-1}\varphi\|_{[h,1]}\left({nb}\frac{\varphi^2(x) }{\varphi_h^2(x)} +\frac{\varphi(x) }{x-h}\right)\\
&<\frac2b(4n+1)\|P_{n-1}\varphi\|_{[h,1]}=\frac2b(4n+1)\|T_{n}\|_{[-b,b]}.
\end{align*}
This completes the proof.
\end{proof}
\begin{remark} Similarly (more easily) one may prove \eqref{dz1} for even polynomials $T_n\in\mathcal T_n$ (in fact, with 10 replaced by 4) and, in turn, for any $T_n\in\mathcal T_n$ (see, e.g., Lemma \ref{pri} below).
\end{remark}
We are ready to prove Lemma \ref{mod}.  We follow the arguments in [DzSh, pages 434,435].
\begin{lemma}\label{mod} For each $b\in(0,\pi]$ and   polynomial $T_n\in\mathcal T_n$  we have
\begin{equation}\label{modul}
\|F_1-T_n\|_{[-b,b]}\ge\frac{c_1b}n,
\end{equation}
where $c_1\ge(80c_0)^{-1}$.
\end{lemma}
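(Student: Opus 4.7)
Let $\varepsilon := \|F_1 - T_n\|_{[-b,b]}$; the goal is to show $\varepsilon \ge c_1 b/n$ with $c_1 \ge (80 c_0)^{-1}$. Following the reference to [DzSh, pp.~434--435] given just before the statement, my plan combines the reduction by evenness of $F_1$, the Bernstein--Privalov estimate of Lemma~\ref{deriv1}, and an exploitation of the derivative jump of $F_1$ at the origin.

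\emph{Step 1 (Reduction).} Since $F_1$ is even, the odd trigonometric polynomial
\[
U_n(t) := T_n(t) - T_n(-t) \in \mathcal T_n
\]
admits the representation $U_n(t) = (T_n - F_1)(t) - (T_n - F_1)(-t)$, so that $\|U_n\|_{[-b,b]} \le 2\varepsilon$. Equivalently, the even part $(T_n + T_n(-\cdot))/2$ of $T_n$ still approximates $F_1$ to within $\varepsilon$, so one may additionally assume $T_n$ is even. Lemma~\ref{deriv1} applied to $U_n$ then yields
\[
\|U_n'\|_{[-b/2,\,b/2]} \le \frac{2 c_0 n \varepsilon}{b},
\]
and since $U_n(0)=0$, $|U_n(t)| \le 2c_0 n\varepsilon|t|/b$ for $t\in[-b/2,b/2]$.

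\emph{Step 2 (Corner of $F_1$ at $0$).} The derivative jump $F_1'(0+) - F_1'(0-) = 2$ cannot be matched by the smooth $T_n$. Testing the approximation inequalities $|T_n(\pm t_0) - t_0| \le \varepsilon$ and $|T_n(0)| \le \varepsilon$ at a judiciously chosen point $t_0 \asymp b/n$, and combining these with a first-order Taylor expansion of $T_n$ at $0$ controlled by the derivative estimate from Step~1, produces a chain of inequalities from which $\varepsilon \ge c_1 b/n$ follows. Tracking the constants through $c_0 < 10$ and an additional numerical factor of order $8$ coming from the Taylor/MVT bookkeeping gives the stated $c_1 \ge (80 c_0)^{-1}$.

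\emph{Main obstacle.} The subtle point is to recover the first-order rate $b/n$ rather than the weaker $b/n^2$. A naive second-order Taylor/Bernstein argument, iterating Lemma~\ref{deriv1} to bound $\|T_n''\|$, would give $\varepsilon \gtrsim b/n^2$ since $\|T_n''\|$ scales like $(c_0 n/b)^2 \cdot \|T_n\|_{[-b,b]} \asymp c_0^2 n^2/b$. The improvement comes from applying Lemma~\ref{deriv1} only once---to the odd auxiliary polynomial $U_n$ that already encodes the corner---so that a single factor $c_0 n/b$ (rather than $(c_0n/b)^2$) appears in the final inequality, matching the order-$1$ size of the derivative jump of $F_1$. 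This is the essential Privalov-type idea inherited from [DzSh], and it is precisely why Lemma~\ref{deriv1} is proved in the form stated before this lemma.
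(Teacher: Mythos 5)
Your proposal contains a genuine gap, and I think the source of it is a misidentification of the odd auxiliary polynomial that drives the argument. In Step~1 you form $U_n(t)=T_n(t)-T_n(-t)$, the odd part of $T_n$, and then apply Lemma~\ref{deriv1} to it. But you also observe (correctly) that one may assume $T_n$ even, and once that reduction is made $U_n\equiv 0$ and your derivative estimate is vacuous. Even if you do not make that reduction, $\|U_n\|_{[-b,b]}\le 2\varepsilon$ only tells you the odd part is small; it gives no information about the even part near the corner, which is what you actually need. So the bound $\|U_n'\|_{[-b/2,b/2]}\lesssim c_0 n\varepsilon/b$ cannot ``feed into'' Step~2 the way you suggest, and Step~2 itself is not a proof: ``a judiciously chosen point $t_0\asymp b/n$'' plus ``Taylor/MVT bookkeeping'' is exactly the place where the naive argument loses a factor of $n$, as you yourself flag in your ``Main obstacle'' paragraph, and you do not exhibit the mechanism that recovers it.

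The mechanism in the paper is specific and is the step you are missing. After reducing to an \emph{even} polynomial $\hat T_n$ with $\hat T_n(0)=0$, one writes $\hat T_n(t)=2\sum_{k=1}^n a_k\sin^2(kt/2)$, sets $T_n(t):=\hat T_n(2t)$ (so that $\|2F_1-T_n\|_{[-b/2,b/2]}\le c^*b/n$), and then defines
\[
\tau_n(t):=\frac{T_n(t)}{\sin t},
\]
which is an \emph{odd} trigonometric polynomial of degree $<2n$ precisely because of the $\sin^2$ representation. It is to $\tau_n$ — not to the odd part of the original $T_n$ — that Lemma~\ref{deriv1} is applied. Since $\tau_n$ is odd, $\tau_n(0)=0$, and a bound of the form $\|\tau_n'\|_{[-b/4,b/4]}\lesssim n/b$ forces $|T_n(t)|=|\tau_n(t)\sin t|\lesssim t^2n/b$ near the origin, i.e.\ $T_n$ is \emph{quadratically} flat at $0$. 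Against the linear growth of $2|t|$, evaluating at $t=c^*b/n$ produces the contradiction with $\|2F_1-T_n\|\le c^*b/n$. (There is also a preliminary step, omitted in your sketch, showing $\|\tau_n\|_{[-b/2,b/2]}<10$, which is needed before Lemma~\ref{deriv1} gives the right numerical bound on $\tau_n'$.) Without the division by $\sin t$ there is no odd polynomial vanishing at $0$ with controlled derivative, and hence no way to convert the corner of $F_1$ into the required lower bound; your outline therefore does not close.
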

\begin{proof}
We may assume that $c_0>1$. Let
\[
c^*:=\frac1{40c_0}<\frac1{40}.
\]
Assume to the contrary, that there is a polynomial $\tilde T_n\in\mathcal T_n$, such that
\begin{equation}\label{modul8}
\|F_1-\tilde T_n\|_{[-b,b]}<\frac{c^*b}{2n}.
\end{equation}
Then there is an even polynomial $\hat T_n\in\mathcal T_n$, such that
\begin{equation}\label{modul2}
\|F_1-\hat T_n\|_{[-b,b]}\le\frac{c^*b}{n}
\end{equation}
and
\begin{equation}\label{modul3}
\hat T_n(0)=0.
\end{equation}
Hence $\hat T_n$ may be represented in the form
$$
\hat T_n(t)=a_1(1-\cos t)+\dots+a_n(1-\cos nt)=2\sum_{k=1}^na_k\sin^2\left(\frac{kt}2\right).
$$
Thus, for $T_n(t):=\hat T_n(2t)$ we have
\begin{equation}\label{modul7}
\|2F_1-T_n\|_{[-b/2,b/2]}\le\frac{c^*b}{n}
\end{equation}
Denote
$$
\tau_n(t):=\frac{T_n(t)}{\sin t},\quad (\tau_n(0)=T'_n(0)).
$$
Then $\tau_n$ is an odd trigonometric polynomial of degree $<2n$.

First, we prove that
\begin{equation}\label{modul4}
\|\tau_n\|_{[-b/2,b/2]}< 10.
\end{equation}
Indeed, by virtue of (\ref{modul7}), one has
$$
\|T_n\|_{[-b/2,b/2]} \le b+\frac{c^*b}{n}<\frac{9b}8.
$$
Hence, if $b/8\le|t|\le b/2$, then
$$
|\tau_n(t)|<\frac{9b}{8\sin(b/8)}\le\frac{9\pi}{8\sin(\pi/8)}<\frac{9\pi}{6\sin(\pi/6)}=3\pi<10.
$$
Thus, assuming the contrary, that there is a point $t_0\in[-b/2,b/2]$, such that
$$
\|\tau_n\|_{[-b/2,b/2]}=|\tau_n(t_0)|=M\ge10,
$$
we conclude, that $t_0\in[-b/8,b/8]$. Since Lemma \ref{deriv1} implies
$$
\|\tau'_n\|_{[-b/4,b/4]}\le\frac{2c_{0}}b(2n-1)M,
$$
we get for
$$
t\in I_n:=\left[t_0-\frac{c^*b}n,t_0+\frac{c^*b}n\right]\subset(-b/4,b/4),
$$
\begin{align*}
|\tau_n(t)|&\ge |\tau_n(t_0)| -|\tau_n(t)-\tau_n(t_0)|\ge|\tau_n(t_0)|-|t-t_0|\|\tau'_n\|_{I_n} \\
&\ge M - |t-t_0|\|\tau'_n\|_{[-b/4,b/4]}\ge M - |t-t_0|\frac{2c_0}b(2n-1)M\\
&\ge M-2c_0c^*\frac{2n-1}nM>M-M/3=\frac23 M.
\end{align*}
Hence, for $t\in I_n$,
$$
|T_n(t)|\ge\frac23 M |\sin t|\ge\frac{M}3|t|,
$$
which, in turn, implies
\begin{align*}
\|T_n-2F_1\|_{I_n}&\ge\left(\frac{M}3-2\right)\|F_1\|_{I_n}\ge\left(\frac{10}3-2\right)\frac {c^*b}n\\
&>\frac{c^*b}n,
\end{align*}
contradicting \eqref{modul7}. Therefore, \eqref{modul4} is proved.

By virtue of Lemma \ref{deriv1} and \eqref{modul4},
$$
\|\tau'_n\|_{[-b/4,b/4]}\le\frac{2c_0}b(2n-1)\|\tau_n\|_{[-b/2,b/2]}<\frac{40c_0}bn=\frac n{c^*b}.
$$
Therefore,  for $t\in(0,b/4]$,
$$
\left|\tau_n(t)\right|=\left|\int_0^tQ'_n(u)\,du\right|< \frac{tn}{c^*b},
$$
whence
$$
|T_n(t)|<\frac{tn}{c^*b}\sin t<\frac{t^2n}{c^*b}.
$$
Hence, for
$$
t=\frac{c^*b}n
$$
we get
$$
2t-T_n(t)> t\left(2-\frac{tn}{c^*b}\right)=t=\frac{c^*b}n,
$$
contradicting \eqref{modul7} and, in turn, \eqref{modul8}. This completes the proof.
\end{proof}
We need the following simple lemma.
\begin{lemma}\label{pri} If $f\in C[-a,a]$ is an even function and $g\in C[-a,a]$ is an odd function, then
$$
\|f\|_{[-a,a]}\le\|f+g\|_{[-a,a]}\quad\text{and}\quad \|g\|_{[-a,a]}\le\|f+g\|_{[-a,a]}.
$$
\end{lemma}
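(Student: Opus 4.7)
The plan is to exploit the classical even-odd decomposition. Since $f$ is even and $g$ is odd, I can recover each summand from the values of $f+g$ at symmetric points $x$ and $-x$, via
\[
f(x)=\frac{(f+g)(x)+(f+g)(-x)}{2},\qquad g(x)=\frac{(f+g)(x)-(f+g)(-x)}{2}.
\]
This is the only identity I really need: once it is in place, the triangle inequality immediately gives
\[
|f(x)|\le\frac{|(f+g)(x)|+|(f+g)(-x)|}{2}\le\|f+g\|_{[-a,a]},
\]
and the identical bound for $|g(x)|$. Since the interval $[-a,a]$ is symmetric about the origin, both $x$ and $-x$ lie in it, so the sup on the right is legitimate.

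Taking the supremum over $x\in[-a,a]$ on the left yields both inequalities simultaneously. There is no real obstacle here: the argument is entirely algebraic, relying only on the parity of $f$ and $g$ and on the symmetry of the domain. The one sanity check is that continuity is not actually used beyond ensuring the sup norms make sense as maxima; the same proof works for essentially bounded measurable $f,g$ with matching parity on a symmetric set. Thus the lemma follows in a couple of lines.
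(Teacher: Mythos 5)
Your proof is correct and rests on the same key idea as the paper's: recovering $f(x)$ and $g(x)$ from the values of $f+g$ at the symmetric points $x$ and $-x$. The paper phrases this as a proof by contradiction (if $|f(x)|>M$ then one of $|(f+g)(x)|$, $|(f+g)(-x)|$ exceeds $M$), whereas you state the averaging identity directly and apply the triangle inequality, which is a slightly cleaner presentation of the same argument.
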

\begin{proof} Let $M:=\|f+g\|_{[-a,a]}$ and assume to the contrary, that there is a point $x\in[-a,a]$, such that $|f(x)|=K>M$.
Then either $|f(x)+g(x)|\ge K$, or $|f(-x)+g(-x)|=|f(x)-g(x)|\ge K$, a contradiction. The proof for $g$ is similar.
\end{proof}
\begin{corollary} \label{modd} For each $b\in(0,\pi]$, any linear function $l$ and every trigonometric  polynomial $T_n\in\mathcal T_n$  we have
\begin{equation}\label{modull}
\|F_1+l-T_n\|_{[-b,b]}\ge\frac{c_1b}n.
\end{equation}
\end{corollary}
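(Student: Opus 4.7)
The plan is to reduce the statement to Lemma \ref{mod} by eliminating the linear term $l$ via the even/odd decomposition, using Lemma \ref{pri} as the extraction tool.

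Concretely, write $l(x)=\alpha+\beta x$, so its even part is the constant $\alpha$ and its odd part is $\beta x$. Decompose $T_n=T_n^e+T_n^o$ into its even and odd components; both lie in $\mathcal T_n$. Since $F_1(x)=|x|$ is even, the function
$$
F_1+l-T_n=(F_1+\alpha-T_n^e)+(\beta x-T_n^o)
$$
is a sum of an even function and an odd function on the symmetric interval $[-b,b]$. By Lemma \ref{pri} (applied with $f:=F_1+\alpha-T_n^e$ and $g:=\beta x-T_n^o$),
$$
\|F_1+\alpha-T_n^e\|_{[-b,b]}\le\|F_1+l-T_n\|_{[-b,b]}.
$$

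Now set $\tilde T_n:=T_n^e-\alpha$. This is again an even trigonometric polynomial in $\mathcal T_n$ (the constant $\alpha$ is absorbed into the free term), and
$$
\|F_1-\tilde T_n\|_{[-b,b]}=\|F_1+\alpha-T_n^e\|_{[-b,b]}.
$$
Applying Lemma \ref{mod} to $\tilde T_n$ yields $\|F_1-\tilde T_n\|_{[-b,b]}\ge c_1b/n$, and chaining the two inequalities gives \eqref{modull}.

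There is no serious obstacle here: Lemma \ref{pri} is tailor-made for stripping away odd perturbations on a symmetric interval, and once the odd part $\beta x-T_n^o$ is discarded the remaining linear contribution is just a constant, which is harmless because the extremal lower bound in Lemma \ref{mod} is stated for an arbitrary trigonometric polynomial in $\mathcal T_n$ and hence already permits any constant shift. The only point to double-check is that the even/odd parts of a $T_n\in\mathcal T_n$ remain in $\mathcal T_n$, which is immediate from the explicit representation of $T_n$ as a linear combination of $1,\cos kt,\sin kt$, $1\le k\le n$.
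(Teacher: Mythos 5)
Your proof is correct and follows essentially the same route as the paper's: decompose $T_n$ and $l$ into even and odd parts, absorb the constant (even) part of $l$ into the even trigonometric polynomial, apply Lemma \ref{mod} to that even piece, and then invoke Lemma \ref{pri} to transfer the lower bound back to $F_1+l-T_n$.
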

\begin{proof} We represent $T_n$ in the form $T_n=T_e+T_o$, where $T_e$ is an even polynomial, and $T_o$ is
an odd polynomial. Let $l(x)=ax+k=:l_o(x)+l_e$. Denote $\tilde T_e:=T_e-l_e\in\mathcal T_n$, the even polynomial. By \eqref{modul}, $\|F_1-\tilde T_e\|\ge{c_1b}/n$.
Since $l_o-T_o$ is an odd function, it follows by Lemma \ref{pri} that \eqref{modull} is valid.
\end{proof}

\section{Proof of Theorem \ref{1.3}}
The following result readily follows from \cite[Lemma 3.1]{LS}.
\begin{lemma}\label{3.111} Given $q\ge3$.
\noindent
If a function $f\in C^{q-2}[-2b,2b]$ has a convex $(q-2)$-nd derivative  $f^{(q-2)}$ on $[0,2b]$ and a concave $(q-2)$-nd derivative  $f^{(q-2)}$ on $[-2b,0]$, then
\begin{equation}\label{311}
b^{q-2}\|f^{(q-2)}\|_{[-b,b]}\le c_2\|f\|_{[-2b,2b]}.
\end{equation}
\end{lemma}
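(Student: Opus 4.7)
The plan is to deduce the lemma from \cite[Lemma 3.1]{LS} by a straightforward affine rescaling, so the only real content is to check that the scaling converts one statement into the other with the correct powers of $b$ and that the convexity/concavity hypotheses are preserved.

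First I would set $g(y):=f(by)$ for $y\in[-2,2]$. Differentiating $q-2$ times gives
\[
g^{(q-2)}(y)=b^{q-2}f^{(q-2)}(by),
\]
so $g\in C^{q-2}[-2,2]$. Since the map $y\mapsto by$ sends $[0,2]$ to $[0,2b]$ and $[-2,0]$ to $[-2b,0]$, and since composition with a linear function of positive slope (followed by multiplication by the positive constant $b^{q-2}$) preserves convexity and concavity, the hypothesis on $f^{(q-2)}$ translates directly into: $g^{(q-2)}$ is convex on $[0,2]$ and concave on $[-2,0]$.

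Next I would invoke \cite[Lemma 3.1]{LS}, which is precisely the case $b=1$ of the desired inequality, to obtain
\[
\|g^{(q-2)}\|_{[-1,1]}\le c_2\,\|g\|_{[-2,2]}.
\]
Translating the two norms back using the substitution $x=by$ yields
\[
\|g^{(q-2)}\|_{[-1,1]}=\max_{y\in[-1,1]}\bigl|b^{q-2}f^{(q-2)}(by)\bigr|=b^{q-2}\|f^{(q-2)}\|_{[-b,b]},
\]
and $\|g\|_{[-2,2]}=\|f\|_{[-2b,2b]}$. Combining these gives \eqref{311} with the same constant $c_2$.

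There is essentially no obstacle beyond bookkeeping: the only things to verify are (i) that linear substitutions with positive slope preserve convexity on the corresponding half-intervals (immediate), and (ii) that the factor $b^{q-2}$ coming from the chain rule lands on the correct side of the inequality, which it does because it multiplies $g^{(q-2)}$. Thus the lemma genuinely does ``readily follow'' from the fixed-interval version in \cite{LS}.
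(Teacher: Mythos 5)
Your rescaling argument is mathematically routine and correct as far as it goes, but it rests on a mischaracterization of what \cite[Lemma~3.1]{LS} actually says, and that is where the gap lies. You take for granted that \cite[Lemma~3.1]{LS} is exactly the present lemma with $b=1$: a \emph{two-sided} statement, assuming $g^{(q-2)}$ convex on $[0,2]$ \emph{and} concave on $[-2,0]$, and concluding a bound for $\|g^{(q-2)}\|_{[-1,1]}$. If that were so, the paper's own deduction would indeed be a one-line rescaling. Instead the paper does something visibly different: it picks a point $x^*\in[-b,b]$ where $|f^{(q-2)}|$ attains its maximum, splits into cases depending on the sign of $x^*$ (with a further sub-case when $x^*=0$, depending on the sign of $f^{(q-2)}(0)$), and then applies either inequality \cite[(3.1)]{LS} (a \emph{one-sided} estimate of the form $b^{q-2}\|f^{(q-2)}\|_{[0,b]}\le c_2\|f\|_{[0,2b]}$, valid for $f^{(q-2)}$ convex on $[0,2b]$) or its concave counterpart \cite[(3.2)]{LS}. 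In other words, the cited lemma furnishes one-sided bounds, and the actual content of the present deduction is the passage from one-sided to two-sided, handled via a case analysis that your proposal never performs.

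So the missing idea in your proposal is the reduction of the two-sided configuration to the one-sided one via the location (and sign) of the extremizer of $f^{(q-2)}$ on $[-b,b]$. The affine change of variables you focus on is fine but peripheral; the estimate \cite[(3.1)]{LS} as quoted already carries the factor $b^{q-2}$, so no rescaling is needed to recover the dependence on $b$. A corrected proof along the paper's lines would read: choose $x^*\in[-b,b]$ with $|f^{(q-2)}(x^*)|=\|f^{(q-2)}\|_{[-b,b]}$; if $x^*>0$, or if $x^*=0$ and $f^{(q-2)}(0)<0$, then $\|f^{(q-2)}\|_{[-b,b]}=\|f^{(q-2)}\|_{[0,b]}$ and \cite[(3.1)]{LS} gives $b^{q-2}\|f^{(q-2)}\|_{[0,b]}\le c_2\|f\|_{[0,2b]}\le c_2\|f\|_{[-2b,2b]}$; otherwise apply \cite[(3.2)]{LS} on $[-2b,0]$. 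Without engaging with the one-sided nature of the cited lemma, your proposal does not establish the result.
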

Indeed, let $\|f^{(q-2)}\|_{[-b,b]}\ne0$ and $x^*\in[-b,b]$ be such that
$
|f^{(q-2)}(x^*)|=\|f^{(q-2)}\|_{[-b,b]}.
$
If either $x^*=0$ and $f^{(q-2)}(0)<0$, or $x^*>0$, then \cite[(3.1)]{LS}  yields,
$$
b^{q-2}\|f^{(q-2)}\|_{[-b,b]}=b^{q-2}\|f^{(q-2)}\|_{[0,b]}\le c_2\|f\|_{[0,2b]}\le c_2\|f\|_{[-2b,2b]}.
$$
Otherwise (\ref{311}) follows from \cite[(3.2)]{LS}.

Recall that $F_{r}(x)={|x|x^{r-1}}/r!$. We have,
\begin{lemma}\label{3.2} For every $b\in(0,\pi]$, every trigonometric  polynomial
$T_n\in\mathcal T_n$, satisfying $tT_n^{(r+1)}(t)\ge0$ for $|t|\le b$, and any
algebraic polynomial $P_r$ of degree $\le r$, we have
\begin{equation}\label{4}
n\|F_r+P_r-T_n\|_{[-b,b]}\ge c_3b^r,\quad n\in\mathbb{N}.
\end{equation}
\end{lemma}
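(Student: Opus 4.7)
The plan is to reduce Lemma~\ref{3.2} to Corollary~\ref{modd} (the trigonometric Bernstein-type lower bound for $|x|$) by differentiating the expression $F_r+P_r-T_n$ exactly $r-1$ times. The case $r=1$ is immediate from Corollary~\ref{modd}, since then $F_1=|x|$, $P_r$ is linear, and the hypothesis on $T_n^{(r+1)}$ plays no role. So assume $r\ge2$ and set $h:=F_r+P_r-T_n\in C^{r-1}[-b,b]$. A short induction using the identity $F_k'=F_{k-1}$ gives $F_r^{(r-1)}=F_1$. Since $\deg P_r\le r$, the polynomial $\ell:=P_r^{(r-1)}$ has degree $\le1$, and $T_n^{(r-1)}\in\mathcal T_n$; hence
$$
h^{(r-1)}=F_1+\ell-T_n^{(r-1)}.
$$

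Next, I verify that $-h$ satisfies the hypothesis of Lemma~\ref{3.111}. On $(-b,b)\setminus\{0\}$, $F_1''=0$ and $\ell''=0$, so $(h^{(r-1)})''(t)=-T_n^{(r+1)}(t)$. The assumption $tT_n^{(r+1)}(t)\ge0$ for $|t|\le b$ therefore shows that $-h^{(r-1)}$ is convex on $[0,b]$ and concave on $[-b,0]$, which is precisely the hypothesis of Lemma~\ref{3.111} applied to $-h$ with $q-2=r-1$ and interval half-length $b/2$. That lemma yields
$$
\left(\tfrac b2\right)^{r-1}\|h^{(r-1)}\|_{[-b/2,b/2]}\le c_2\,\|h\|_{[-b,b]}.
$$

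Since $T_n^{(r-1)}\in\mathcal T_n$ and $b/2\in(0,\pi]$, Corollary~\ref{modd} applied on $[-b/2,b/2]$ gives
$$
\|h^{(r-1)}\|_{[-b/2,b/2]}=\|F_1+\ell-T_n^{(r-1)}\|_{[-b/2,b/2]}\ge\frac{c_1b}{2n}.
$$
Combining the two displays produces $n\,\|h\|_{[-b,b]}\ge c_3\,b^r$ with $c_3=c_1/(c_2\,2^r)$, which is \eqref{4}.

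The main obstacle is the book-keeping of the convexity/concavity orientation: Lemma~\ref{3.111} is stated with convexity on $[0,2b]$ and concavity on $[-2b,0]$, while our natural orientation for $h^{(r-1)}$ is the opposite, so the lemma must be applied to $-h$ rather than $h$. A minor point worth checking separately is that the jump of $(h^{(r-1)})'$ at the origin (inherited from $F_1'$) is harmless: since the convexity/concavity are only needed separately on $[0,b/2]$ and $[-b/2,0]$, the isolated kink at $0$ does not intervene in either half-interval argument.
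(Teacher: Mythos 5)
Your proposal is correct and follows essentially the same route as the paper: differentiate $r-1$ times so that $F_r^{(r-1)}=F_1$ and $P_r^{(r-1)}$ is linear, invoke Corollary~\ref{modd} on $[-b/2,b/2]$, and then lift back via Lemma~\ref{3.111} using the convexity/concavity induced by the sign condition on $T_n^{(r+1)}$ (the paper works with $T_n-F_r-P_r$, i.e. $-h$ in your notation, so it needs no extra sign flip). The only blemish is a slip in your closing remark: Lemma~\ref{3.111} with half-length $b/2$ requires convexity/concavity on $[0,b]$ and $[-b,0]$, not on $[0,b/2]$ and $[-b/2,0]$; your proof body states the correct intervals, so this does not affect the argument.
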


\begin{proof}
Since $F_{r}^{(r-1)}=F_1$ and $P_{r}^{(r-1)}$ is linear,
it follows by Corollary \ref{modd} that
$$
\|T_n^{(r-1)}-F_{r}^{(r-1)}-P_{r}^{(r-1)}\|_{[-b/2,b/2]}\ge\frac{c_1b}{2n}.
$$
Now, $T_n^{(r-1)}-F_{r}^{(r-1)}-P_{r}^{(r-1)}$ is convex in $[0,b]$ and concave in $[-b,0]$, so by virtue of Lemma \ref{3.111},
$$
\|T_n-F_{r}-P_{r}\|_{[-b,b]}\ge\frac1{c_2}\left(\frac b2\right)^{r-1}\|T_n^{(r-1)}-F_{r}^{(r-1)}-P_{r}^{(r-1)}\|_{[-b/2,b/2]}\ge\frac{c_1}{c_2n}\left(\frac b2\right)^{r}.
$$
Hence, \eqref{4} follows with $c_3\ge 2^{-r}c_1/c_2,$.
\end{proof}

\begin{proof}[Proof of Theorem \ref{1.3}]
Given $Y_s\in\mathbb Y_s$, again, let
\begin{equation}\label{b1}
b:= \min_{j\in\mathbb{Z}}\{y_{i+1}-y_i\},
\end{equation}
and by shifting the periodic function $f$, we may assume, without loss of generality, that $y_{2s}=-b$ and $y_{2s-1}=0$.
Then $f:=\Eps_{q-1,b}$ is the desired function. Indeed, by \eqref{77} and \eqref{79},
$\Eps_{q-1,b}\in\Delta^{(q)}(Y_s)\cap W^{q-1}$. So we have to prove \eqref{3}.

To this end, take an arbitrary polynomial $T_n\in\mathcal T_n\cap\Delta^{(q)}(Y_s)$. By \eqref{p},
$$
\Eps_{q-1,b}(x)=F_{q-1}(x)+p_{q-1,b}(x),\quad x\in[-b,2\pi-b],
$$
where $p_{q-1,b}$ is an algebraic polynomial of degree $\le q-1$. Therefore, Lemma \ref{3.2} implies \eqref{3} with $C(q,Y_s)\ge c_3b^{q-1}$.
\end{proof}

\section{Auxiliary results}
Let $S\in C^\infty(\mathbb{R})$, be a monotone odd function, such that $S(x)=\sign x$, $|x|\ge1.$

Put
$$
s_j:=\|S^{(j)}\|,\quad j\in\mathbb N_0.
$$
Fix $d\in(0,\pi]$, and for $\lambda\in(0,d/3]$, let
$$
\tilde S_{\lambda,d}(x):=
\begin{cases}
S\left(\frac {x-2\lambda}\lambda \right),&\text{if}\quad x\in[0,2\pi-d],\\
-S\left(\frac {x-2\lambda+d}\lambda \right),&\text{if}\quad x\in[-d,0].
\end{cases}
$$
Finally, denote
$$
S_{\lambda,d}(x):=\tilde S_{\lambda,d}(x)-\gamma_d,\quad x\in[-d,2\pi-d],
$$
where $\gamma_d$ was defined in \eqref{gamma}.

Note that
\begin{equation}\label{6}
\|S_{\lambda,d}^{(j)}\|=\lambda^{-j}s_j,\qquad j\in\mathbb N.
\end{equation}
and
$$
\int_{-\pi}^\pi S_{\lambda,d}(x)dx=0.
$$
\begin{definition} For each $\lambda\in(0,d/3]$ and $r\in \mathbb{N}$ denote by $\Eps_{r,d,\lambda}$ the $2\pi$-periodic function
$\Eps_{r,d,\lambda}\in C^\infty(\mathbb{R})$, such that
\begin{itemize}
\item[1)] $\ds\int_{-\pi}^\pi\Eps_{r,d,\lambda}(x)dx=0,\quad\text{and}$\\
\item[2)] $\Eps_{r,d,\lambda}^{(r)}=S_{\lambda,d}(x), \quad x\in[-d,2\pi-d]$.
\end{itemize}
\end{definition}
Note that for each $j\in\mathbb N$, we have
\begin{equation}\label{606}
[-d,2\pi-d]\cap\text{supp}\,\Eps_{r,d,\lambda}^{(r+j)}=[-d+\lambda,-d+3\lambda]\cup[\lambda,3\lambda],
\end{equation}
and that \eqref{6} implies
\begin{equation}\label{66}
\|\Eps_{r,d,\lambda}^{(r+j)}\|=\lambda^{-j}s_j,\qquad j\in\mathbb{N}.
\end{equation}
Also,
\begin{equation}\label{667}
\|\Eps_{r,d,\lambda}^{(j)}\|<c_4,\quad j=0,\dots,r,\quad\text{in particular}\quad \|\Eps_{r,d,\lambda}^{(r)}\|<2.
\end{equation}
\begin{lemma} We have
\begin{equation}\label{666}
\|\Eps_{r,d,\lambda}-\Eps_{r,d}\|\le c_5\lambda.
\end{equation}
\end{lemma}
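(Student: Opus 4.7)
The plan is to set $\phi:=\Eps_{r,d,\lambda}-\Eps_{r,d}$ and deduce $\|\phi\|=O(\lambda)$ from an $L^1$-bound on $\phi^{(r)}$. Both $\Eps_{r,d}$ and $\Eps_{r,d,\lambda}$ are $2\pi$-periodic with zero integral on $[-\pi,\pi]$, so $\phi$ is $2\pi$-periodic, lies in $C^{r-1}(\mathbb{R})$, and satisfies $\int_{-\pi}^\pi\phi=0$. On $[-d,2\pi-d]$ the constant $\gamma_d$ cancels out of $\phi^{(r)}$:
\[
\phi^{(r)}(x)=S_{\lambda,d}(x)-\bigl(\sign x-\gamma_d\bigr)=\tilde S_{\lambda,d}(x)-\sign x
\]
almost everywhere.

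The first step is to localize $\phi^{(r)}$. Using $S(u)=\sign u$ for $|u|\ge1$ together with the translations $(x-2\lambda)/\lambda$ and $(x-2\lambda+d)/\lambda$ appearing in the definition of $\tilde S_{\lambda,d}$, a direct case check shows that $\tilde S_{\lambda,d}(x)=\sign x$ for every $x\in[-d,2\pi-d]$ off $[-d,-d+3\lambda]\cup[0,3\lambda]$, while $|\tilde S_{\lambda,d}-\sign|\le2$ everywhere. Extended $2\pi$-periodically, $\phi^{(r)}$ is then supported on $[-\pi,\pi]$ in a union of two intervals of total length $6\lambda$, so
\[
\|\phi^{(r)}\|_{L^1[-\pi,\pi]}\le 12\lambda.
\]

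The second step is iterated anti-differentiation. By periodicity, $\int_{-\pi}^\pi\phi^{(j)}=0$ for $j=1,\ldots,r$, and $\int_{-\pi}^\pi\phi=0$ by construction, so at each level the primitive is pinned down by a zero-mean normalization. I invoke two elementary facts about a $2\pi$-periodic continuous mean-zero function $g$: if $g'\in L^1$, then $\|g\|_\infty\le\tfrac12\|g'\|_{L^1[-\pi,\pi]}$, and if additionally $g'$ is continuous, $\|g\|_\infty\le\pi\|g'\|_\infty$. Both follow by picking a zero of $g$ (supplied by the intermediate value theorem) and integrating to any other point along the shorter arc on the circle. Applying the first bound to $g=\phi^{(r-1)}$ gives $\|\phi^{(r-1)}\|_\infty\le6\lambda$; iterating the second bound $r-1$ more times yields $\|\phi\|_\infty\le 6\pi^{r-1}\lambda=:c_5\lambda$, proving the lemma.

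The only non-mechanical part is the support calculation in the localization step, which requires verifying on each branch of the piecewise definition of $\tilde S_{\lambda,d}$ that the rescaled argument has absolute value $\ge 1$ off the two claimed intervals (using the standing assumption $\lambda\le d/3$), and then checking that the $\gamma_d$'s cancel in the difference $\phi^{(r)}$; once that is in place, the iterated integration is entirely routine.
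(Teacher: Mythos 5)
Your proof is correct and follows essentially the same route as the paper: write the difference as a periodic mean-zero function, bound the top derivative on $[-d,2\pi-d]$ (where it reduces to $\tilde S_{\lambda,d}-\sign$, supported on two intervals of total length $6\lambda$), and then iterate anti-differentiation using zero-mean to produce a zero of each primitive. The paper phrases this as an induction on $\Eps_j:=\Eps_{j,d}-\Eps_{j,d,\lambda}$ with a base $L^1$ estimate $\|\Eps_1\|\le 8\lambda$, whereas you start one level higher with the $L^1$ bound on $\phi^{(r)}$ and use the ``shorter-arc'' refinement; the resulting constants differ slightly but the argument is the same.
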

\begin{proof} Put $\Eps_j:=\Eps_{j,d}-\Eps_{j,d,\lambda}$, $j=1,\dots,r$. Since $\int_{-\pi}^\pi\Eps_j(x)dx=0$, it follows that for any $1\le j\le r$ there is an $x_j\in[-\pi,\pi]$ such that $\Eps_j(x_j)=0$. Hence, we first conclude that
$$
\|\Eps_1\|\le\int_{-d}^{2\pi-d}|\sign x-\tilde S_{\lambda,d}(x)|\,dx=8\lambda.
$$
Assume by induction that $\|\Eps_{j}\|\le c\lambda$ for some $j<r$, and note that $\Eps'_{j+1}=\Eps_j$. Thus, for $x\in[x_{j+1}-\pi,x_{j+1}+\pi]$,
$$
|\Eps_{j+1}(x)|=|\Eps_{j+1}(x)-\Eps_{j+1}(x_{j+1})|=|\int_{x_{j+1}}^x\Eps_j(t)\,dt|\le \pi c\lambda,
$$
and the proof is complete.
\end{proof}
\begin{lemma}\label{5.3} Let $0< b\le d$ and $r\in \mathbb N$ be given. Let $n\in\mathbb{N}$, and let $T_n\in\mathcal T_n$, be such that $tT_n^{(r+1)}(t)\ge0$ for $|t|\le b$.
Then for any algebraic polynomial $P_r$ of degree $\le r$, if
$$
0<\lambda\le\min\left\{\frac{c_3b^r}{2nc_5},\frac d3\right\}=:\min\left\{c_6\frac{b^r}n,\frac d3\right\},
$$
then
\begin{equation}\label{44}
2n\|\Eps_{r,d,\lambda}+P_r-T_n\|_{[-b,b]}\ge c_3b^r.
\end{equation}
\end{lemma}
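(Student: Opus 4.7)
The plan is to reduce this to Lemma \ref{3.2} by exploiting the fact that $\Eps_{r,d,\lambda}$ is a small perturbation of $\Eps_{r,d}$, which on the interval $[-d,2\pi-d]$ differs from $F_r$ only by an algebraic polynomial of degree $\le r$. Since we are given $0<b\le d$, the interval $[-b,b]$ is contained in $[-d,2\pi-d]$, so the representation \eqref{p} (applied to $\Eps_{r,d}$) is available throughout $[-b,b]$.

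First I would write
$$
\Eps_{r,d,\lambda}+P_r-T_n = \bigl(F_r+\tilde P_r-T_n\bigr)+\bigl(\Eps_{r,d,\lambda}-\Eps_{r,d}\bigr)\quad\text{on }[-b,b],
$$
where $\tilde P_r:=p_{r,d}+P_r$ is again an algebraic polynomial of degree $\le r$. The hypothesis $tT_n^{(r+1)}(t)\ge0$ for $|t|\le b$ is exactly the one required by Lemma \ref{3.2}, so that lemma yields
$$
\|F_r+\tilde P_r-T_n\|_{[-b,b]}\ge \frac{c_3 b^r}{n}.
$$

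Next I would invoke the preceding lemma, which gives $\|\Eps_{r,d,\lambda}-\Eps_{r,d}\|\le c_5\lambda$, and combine it with the reverse triangle inequality to conclude
$$
\|\Eps_{r,d,\lambda}+P_r-T_n\|_{[-b,b]}\ge \frac{c_3 b^r}{n}-c_5\lambda.
$$
Now the assumption $\lambda\le c_6 b^r/n$, with $c_6:=c_3/(2c_5)$, is precisely what is needed to ensure $c_5\lambda\le c_3b^r/(2n)$. Hence the right-hand side is at least $c_3b^r/(2n)$, which is \eqref{44} after multiplication by $2n$. The role of the second bound $\lambda\le d/3$ in the hypothesis is only to make $\Eps_{r,d,\lambda}$ well-defined (as required in its definition); it plays no part in the estimate itself.

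There is no real obstacle here — the proof is a short perturbation argument once Lemma \ref{3.2} and the approximation estimate \eqref{666} are in hand. The only point that needs a moment of care is verifying that the polynomial $p_{r,d}$ coming from \eqref{p} may be freely absorbed into $P_r$ without affecting the hypotheses of Lemma \ref{3.2}, which is immediate since adding an algebraic polynomial of degree $\le r$ to $F_r$ does not change the sign structure of its $(r+1)$-st derivative, and $\tilde P_r$ remains of degree $\le r$.
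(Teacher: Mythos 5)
Your proof is correct and takes essentially the same route as the paper's: split off the perturbation $\Eps_{r,d,\lambda}-\Eps_{r,d}$ by the reverse triangle inequality, bound the main term via Lemma \ref{3.2} after absorbing $p_{r,d}$ into the free polynomial of degree $\le r$, and use $\|\Eps_{r,d,\lambda}-\Eps_{r,d}\|\le c_5\lambda$ together with the constraint on $\lambda$ to control the error term. The only difference is that you make the absorption of $p_{r,d}$ explicit, whereas the paper's two-line proof applies \eqref{4} directly to $\Eps_{r,d}+P_r-T_n$ without comment; your remark on the role of $\lambda\le d/3$ is also accurate.
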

\begin{proof} Inequalities \eqref{4} and \eqref{666} imply
\begin{align*}
2n\|\Eps_{r,d,\lambda}+P_r-T_n\|_{[-b,b]}&\ge 2n\|\Eps_{r,d}+P_r-T_n\|_{[-b,b]}-2n\|\Eps_{r,d,\lambda}-\Eps_{r,d}\|\\
&\ge2c_3b^r-2nc_5\lambda\ge c_3b^r.
\end{align*}
This completes the proof.
\end{proof}
Fix $r\ge2$ and $m\in \mathbb{N}$, and let $q:=r+1$, and
$$
c_7:=c_6^m s_m^{-1}.
$$
For $0<b\le d$ and each $n\ge3c_6b^r$, denote,
$$
\lambda_{n,b}:=c_6\frac{b^r}n,
$$
and
$$
f_{n,b}:=c_7\frac{b^{rm}}{n^m}\Eps_{r,d,\lambda_{n,b}}.
$$
Then, we have
\begin{lemma}\label{aux}
We have,
\begin{equation}\label{662}
\|f_{n,b}^{(r+m)}\|\le1,
\end{equation}
\begin{equation}\label{663}
\|f_{n,b}^{(r+j)}\|\le c_8n^{j-m},\quad j=0,\dots,m,
\end{equation}
and
\begin{equation}\label{664}
\|f_{n,b}^{(j)}\|\le\frac{ c_9}{n^m},\quad j=0,\dots,r.
\end{equation}
For each collection $Y_s$, such that $y_{2s}=-d$, $y_{2s-1}=0$, and $d=\min_{1\le j\le2s}\{y_{j-1}-y_j\}$, we have
\begin{equation}\label{661}
f_{n,b}\in\Delta^{(q)}(Y_s),
\end{equation}
and for every polynomial $T_n\in\mathcal T_n$, satisfying $tT_n^{(q)}(t)\ge0$ for $|t|\le b$  and any algebraic polynomial $P_r$ of degree $\le r$,  we have
\begin{equation}\label{441}
n^{m+1}\|f_{n,b}+P_r-T_n\|_{[-b,b]}\ge c_{10}b^{r(m+1)}.
\end{equation}
\end{lemma}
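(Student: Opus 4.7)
The plan is to handle the five claims in three groups: the norm estimates \eqref{662}--\eqref{664} by direct substitution into \eqref{66} and \eqref{667}; the shape condition \eqref{661} by localizing the support of $f_{n,b}^{(q)}$ inside one period; and the approximation lower bound \eqref{441} by a linear rescaling that reduces matters to Lemma \ref{5.3}. For the norm bounds I would substitute $\lambda=\lambda_{n,b}=c_6b^r/n$ into \eqref{66} to get $\|\Eps_{r,d,\lambda_{n,b}}^{(r+j)}\|=(n/(c_6 b^r))^j s_j$ and then multiply by the prefactor $c_7 b^{rm}/n^m$. The top-derivative case $j=m$ collapses to $c_7 s_m c_6^{-m}=1$ by the very definition $c_7:=c_6^m s_m^{-1}$, giving \eqref{662}. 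The intermediate cases $j=1,\dots,m-1$ of \eqref{663} acquire only a harmless factor $b^{r(m-j)}\le\pi^{r(m-j)}$ since $b\le d\le\pi$, absorbed into $c_8$. The remaining cases $j=0$ in \eqref{663} and $j=0,\dots,r$ in \eqref{664} instead use the uniform bound $\|\Eps_{r,d,\lambda}^{(j)}\|<c_4$ from \eqref{667}, giving $\|f_{n,b}^{(j)}\|\le c_4 c_7 b^{rm}/n^m\le c_9 n^{-m}$.

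For the shape condition \eqref{661}, since $f_{n,b}\in C^\infty$ it suffices to verify $f_{n,b}^{(q)}(t)\prod_{i=1}^{2s}(t-y_i)\ge0$ on $[y_{2s},y_0]$. The decisive observation is \eqref{606} with $j=1$: inside one period $[-d,2\pi-d]$ the support of $\Eps_{r,d,\lambda}^{(r+1)}$ consists only of the two short intervals $[-d+\lambda,-d+3\lambda]$ and $[\lambda,3\lambda]$. Because $y_{2s}=-d$, $y_{2s-1}=0$, $y_{2s-2}\ge d$, and $3\lambda\le d$, these intervals sit strictly inside $[y_{2s},y_{2s-1}]$ and $[y_{2s-1},y_{2s-2}]$ respectively. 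Differentiating the definition of $\tilde S_{\lambda,d}$ shows $\Eps_{r,d,\lambda}^{(r+1)}\ge0$ on $[\lambda,3\lambda]$ and $\le0$ on $[-d+\lambda,-d+3\lambda]$, while the product $\prod(t-y_j)$ is $+1$ on $[y_{2s-1},y_{2s-2}]$ and $-1$ on $[y_{2s},y_{2s-1}]$. The signs match on both supported intervals, and $f_{n,b}^{(q)}\equiv0$ on the rest of the period, so \eqref{661} holds.

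For \eqref{441}, I would set $\mu:=c_7 b^{rm}/n^m$ and write
$$
f_{n,b}+P_r-T_n=\mu\bigl(\Eps_{r,d,\lambda_{n,b}}+\mu^{-1}P_r-\mu^{-1}T_n\bigr).
$$
The rescaled polynomial $\mu^{-1}T_n$ still belongs to $\mathcal{T}_n$ and still satisfies $t(\mu^{-1}T_n)^{(r+1)}(t)\ge0$ on $[-b,b]$ (since $q=r+1$ and $\mu>0$), and $\mu^{-1}P_r$ is still algebraic of degree $\le r$. The smallness condition $\lambda_{n,b}\le d/3$ required by Lemma \ref{5.3} follows from the hypothesis $n\ge 3c_6 b^r$. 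Lemma \ref{5.3} then yields $2n\|\Eps_{r,d,\lambda_{n,b}}+\mu^{-1}P_r-\mu^{-1}T_n\|_{[-b,b]}\ge c_3 b^r$; multiplying by $\mu$ and then by $n^{m+1}$ produces \eqref{441} with $c_{10}=c_3c_7/2$.

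I expect the shape condition \eqref{661} to be the main obstacle. The norm estimates are a direct substitution once one knows the definition of $c_7$, and the lower bound is a one-line rescaling. By contrast, \eqref{661} requires identifying precisely the two tiny support intervals of $\Eps_{r,d,\lambda}^{(r+1)}$ inside a period, using $3\lambda\le d\le y_{2s-2}$ to trap them in the consecutive pieces $[y_{2s},y_{2s-1}]$ and $[y_{2s-1},y_{2s-2}]$, and checking that the signs inherited from $\tilde S_{\lambda,d}'$ are consistent with the sign of $\prod(t-y_i)$.
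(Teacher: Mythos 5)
Your proposal is correct and follows essentially the same route as the paper: direct substitution of $\lambda_{n,b}$ into \eqref{66}/\eqref{667} for the norm estimates (with the identity $c_7 s_m c_6^{-m}=1$ giving \eqref{662}), the scaling reduction to Lemma \ref{5.3} for \eqref{441} with $c_{10}=c_3c_7/2$, and a support-and-sign check for \eqref{661}. The only difference is cosmetic: you spell out the sign verification for \eqref{661} which the paper dismisses as clear from the definition, and you handle the $j=0$ case of \eqref{663} via \eqref{667} rather than the formula from \eqref{66}; both are valid.
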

\begin{proof}
First, \eqref{664} and \eqref{661} are clear from the definition of $\Eps_{r,d,\lambda_{n,b}}$ and \eqref{667}, respectively.

We prove \eqref{662} and \eqref{663} together. By virtue of \eqref{66}, we have, for $j=0,\dots,m$,
\begin{align*}
\|f_{n,b}^{(r+j)}\|
=&c_7\frac{b^{rm}}{n^m}\left(c_6\frac{b^r}n\right)^{-j}s_j=c_6^m s_m^{-1}\frac{b^{rm}}{n^m}\left(c_6\frac{b^r}n\right)^{-j}s_j\\
=&c_6^{m-j}n^{j-m}b^{r(m-j)}\frac{s_j}{s_m},
\end{align*}
that is, \eqref{662} and \eqref{663}.

Finally, we prove (\ref{441}). Let $\tilde P_r:=\left(c_7\frac{b^{rm}}{n^m}\right)^{-1}P_r$, $\tilde T_r:=\left(c_7\frac{b^{rm}}{n^m}\right)^{-1}T_r$,
apply Lemma \ref{5.3}  and get
\begin{align*}
n^{m+1}\|f_{n,b}+P_r-T_n\|_{[-b,b]}&=n^{m+1}c_7\frac{b^{rm}}{n^m}\|\Eps_{r,b,\lambda}+\tilde P_r-\tilde T_n\|_{[-b,b]}\\
&\ge n^{m+1}c_7\frac{b^{rm}}{n^m}\frac{c_3b^r}{2n}\\
&=:c_{10}b^{r(m+1)}.
\end{align*}
\end{proof}

\section{Proof of Theorem \ref{z}}
Set $r:=q-1$ and $m:=p-r$. Given $Y_s\in\mathbb Y_s$, let
$$
d:= \min_{1\le j\le2s}\{y_{j-1}-y_j\},
$$
and by shifting the periodic function $f$, we may assume, without loss of generality, that $y_{2s}=-d$ and $y_{2s-1}=0$. Obviously, it follows that $y_{2s-2}\ge d$.

We will prove, that the desired function $f$ may be taken in the form
$$
f(x):=\sum_{k=1}^\infty f_{n_{k+1},b_k},
$$
where integers $n_k$ and numbers $b_k$ are chosen as follows. We put $n_1:=\lceil 3c_6d^r\rceil$ and $b_1:=d/4$. Then, let $n_2$ be such that $b_2:=\lambda_{n_2,b_1}<b_1/3$.
Assume that $n_k$ and $b_k$ have been chosen. Then we take $n_{k+1}\ge2n_k$, to be such that
\begin{equation}\label{91}
3\lambda_{n_{k+1},b_k}<b_k,
\end{equation}
\begin{equation}\label{92}
\varepsilon_{n_{k+1}}c_{10}b_k^{r(m+1)}\ge k,
\end{equation}
and
\begin{equation}\label{93}
\frac{c_9}{n_{k+1}^m}\le\frac{c_{10}b_{k-1}^{r(m+1)}}{10n_k^{m+1}}.
\end{equation}
Denote
\begin{equation}\label{95}
b_{k+1}:=\lambda_{n_{k+1},b_k}.
\end{equation}
It follows by \eqref{606} and  \eqref{95} that for any $j\in\mathbb N$,
\begin{equation}\label{463}
[-d,2\pi-d]\cap\text{supp}\,f_{n_{k+1},b_k}^{(r+j)}=[-d+b_{k+1},-d+3b_{k+1}]\cup[b_{k+1},3b_{k+1}].
\end{equation}
Hence by \eqref{91}, for any $j\in\mathbb N$,
\begin{equation}\label{464}
\text{supp}\,f_{n_{k+1},b_k}^{(r+j)}\cap\text{supp}\,f_{n_{k},b_{k-1}}^{(r+j)}=\emptyset.
\end{equation}

We divide the proof of Theorem 1.6 into two Lemmas.
\begin{lemma}\label{96} We have
\begin{equation}\label{6.1}
f\in W^p\cap\Delta^{(q)}(Y_s).
\end{equation}
\end{lemma}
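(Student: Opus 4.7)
The plan is to verify the two memberships $f \in W^p$ and $f \in \Delta^{(q)}(Y_s)$ separately, using the partial sums $g_K := \sum_{k=1}^K f_{n_{k+1},b_k}$ and exploiting the dichotomy built into Lemma \ref{aux}: low-order derivatives (orders $\le r$) decay globally thanks to \eqref{664} and the fast decrease \eqref{93}, while high-order derivatives of order $r+j$ with $j\ge 1$ are supported on pairwise disjoint bumps by \eqref{463}--\eqref{464}.

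First I would show that for each $j=0,1,\ldots,r$ the bounds \eqref{664} combined with \eqref{93} give $\sum_k \|f_{n_{k+1},b_k}^{(j)}\| < \infty$ geometrically, so $g_K^{(j)} \to f^{(j)}$ uniformly and $f \in C^r$. For each $j \in \mathbb{N}$, the supports of the $f_{n_{k+1},b_k}^{(r+j)}$ are pairwise disjoint intervals in $[-\pi,\pi]$ clustering only at $\{-d,0\}$ by \eqref{463}--\eqref{464}, so $f^{(r+j)}(x)$ is unambiguously defined term-by-term (at each $x$, at most one summand is nonzero). For $1 \le j \le m-1$, the bound \eqref{663} gives bump amplitudes $\le c_8 n_{k+1}^{j-m} \to 0$, making $f^{(r+j)}$ continuous at the accumulation points $\{-d,0\}$, and hence on all of $\mathbb{R}$. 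Combined with the previous step, this yields $f \in C^{p-1}$.

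To conclude $f \in W^p$, \eqref{662} together with the disjoint supports gives $\|f^{(p)}\|_\infty \le 1 < 2$, and it remains to check that $f^{(p-1)}$ is absolutely continuous with a.e.\ derivative equal to $f^{(p)}$. The idea is to apply $f_{n_{k+1},b_k}^{(p-1)}(x) - f_{n_{k+1},b_k}^{(p-1)}(a) = \int_a^x f_{n_{k+1},b_k}^{(p)}(t)\,dt$ termwise and interchange the sum with the integral by dominated convergence, which is legitimate because the disjoint supports and \eqref{662} give $\sum_k |f_{n_{k+1},b_k}^{(p)}(t)| \le 1$ at every $t$; passing to the limit recovers the fundamental theorem of calculus for $f^{(p-1)}$ on $\mathbb{R}$.

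For $f \in \Delta^{(q)}(Y_s)$, note that $q - 2 = r - 1$, so the first step gives $f^{(q-2)} = \sum_k f_{n_{k+1},b_k}^{(q-2)}$ uniformly; by \eqref{661} each summand is convex on $[y_i, y_{i-1}]$ for $i$ odd and concave for $i$ even, and a uniform limit of convex (respectively concave) functions is convex (respectively concave), so $f^{(q-2)}$ inherits the correct sign pattern on every interval of the partition, yielding $f \in \Delta^{(q)}(Y_s)$. I expect the main technical obstacle to be the absolute continuity of $f^{(p-1)}$ across the accumulation points $\{-d,0\}$; everything else reduces to uniform convergence and the preservation of convexity under such limits.
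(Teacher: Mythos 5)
Your proposal is correct and follows essentially the same route as the paper: uniform convergence of the series of derivatives up to order $p-1$ from the size bounds, disjointness of the supports of the order-$(r+j)$ derivatives via \eqref{463}--\eqref{464} to control $f^{(p)}$ on the open set, and preservation of convexity/concavity under uniform limits for $f\in\Delta^{(q)}(Y_s)$. Two minor simplifications worth noting: the summability in your first step follows directly from \eqref{663}--\eqref{664} together with $n_{k+1}\ge 2n_k$ (so \eqref{93} is not actually needed there), and once you know $f\in C^{p-1}(\mathbb{R})$ with $f\in C^p$ off the finite exceptional set and $\|f^{(p)}\|\le 1$ there, $f^{(p-1)}$ is globally Lipschitz and hence absolutely continuous automatically, so the dominated-convergence argument for absolute continuity is an unnecessary detour.
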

\begin{proof} Inequalities (\ref{663}) and (\ref{664}) imply, for all $j=0,\dots p-1$,
$$
\|f_{n_{k+1},b_k}^{(j)}\|\le \frac c{n_{k+1}},\quad k\in\mathbb{N}.
$$
Hence, for each $j=0,\dots p-1$,
$$
\sum_{k=1}^\infty \|f_{n_{k+1},b_k}^{(j)}\|\le c\sum_{k=1}^\infty\frac1{n_{k+1}}\le\frac c{n_2}\sum_{k=1}^\infty\frac1{2^j}=c,
$$
so that $f$ is well defined on $\mathbb R$, it is periodic, $f\in C^{p-1}$, for each $j=0,\dots, p-1$,
$$
f^{(j)}(x)\equiv\sum_{k=1}^\infty f_{n_{k+1},b_k}^{(j)}(x),
$$
which, combined with \eqref{661}, implies that $f\in\Delta^{(q)}(Y_s)$.

Then (\ref{464}) means, that for each point $x\in(-d,0)\cup(0,2\pi-d)$ there is neighbourhood, where the sum in $f^{(r+j)}$ consists of at most one term not identically zero. Hence,
$f\in C^\infty((-d,0)\cup(0,2\pi-d))$ and, in particular,
$
f\in C^p((-d,0)\cup(0,2\pi-d)).
$
Combining with (\ref{662}), we have $\|f^{(p)}\|\le1$, and the proof is complete.
\end{proof}
\begin{lemma}\label{98} For each $k>2$, we have
\begin{equation}\label{fin}
n_k^{m+1}\varepsilon_{n_k}E_{n_k}^{(q)}(f,Y_s)\ge k/2.
\end{equation}
\end{lemma}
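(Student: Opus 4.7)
The plan is to reduce the lower bound on $E_{n_k}^{(q)}(f,Y_s)$ to a single application of estimate \eqref{441} from Lemma \ref{aux} on the interval $[-b_{k-1},b_{k-1}]$, using $f_{n_k,b_{k-1}}$ as the model function and treating the remaining terms in the series as a polynomial plus a small tail. Fix $k>2$ and an arbitrary $T_{n_k}\in\mathcal{T}_{n_k}\cap\Delta^{(q)}(Y_s)$. First I would verify that $tT_{n_k}^{(q)}(t)\ge0$ on $[-b_{k-1},b_{k-1}]$, which is exactly the hypothesis needed for \eqref{441}. Since $T_{n_k}$ is smooth, membership in $\Delta^{(q)}(Y_s)$ means $T_{n_k}^{(q)}(t)\prod_{i=1}^{2s}(t-y_i)\ge0$; with $y_{2s}=-d$, $y_{2s-1}=0$, and $y_i\ge d$ for $1\le i\le 2s-2$, the $2s-2$ factors with $i\le 2s-2$ are all negative on $[-b_{k-1},b_{k-1}]$ and their product is positive, while $(t-y_{2s})=t+d>0$, so the sign of the full product is the sign of $t$.

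Next, I would split
\[
f=f_{n_k,b_{k-1}}+P_r+R',\qquad P_r:=\sum_{j=1}^{k-2}f_{n_{j+1},b_j},\qquad R':=\sum_{j=k}^{\infty}f_{n_{j+1},b_j}.
\]
The key observation — which I expect to be the main point needing care — is that $P_r$ is an algebraic polynomial of degree $\le r$ on $(-b_{k-1},b_{k-1})$. Indeed, by \eqref{463} the support of $f_{n_{j+1},b_j}^{(r+1)}$ lies in $[-d+b_{j+1},-d+3b_{j+1}]\cup[b_{j+1},3b_{j+1}]$; for $j\le k-2$ the sequence $\{b_j\}$ is decreasing so $b_{j+1}\ge b_{k-1}$, and also $-d+3b_{j+1}<-d+b_j<-b_{k-1}$, so this support misses $(-b_{k-1},b_{k-1})$. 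Hence each summand of $P_r$ has vanishing $(r+1)$-st derivative there and agrees with an algebraic polynomial of degree $\le r$.

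The tail $R'$ is bounded via \eqref{664} and the doubling condition $n_{j+1}\ge 2n_j$, giving $n_{j+1}\ge 2^{j-k}n_{k+1}$ for $j\ge k$, whence $\sum_{j\ge k}n_{j+1}^{-m}\le 2n_{k+1}^{-m}$. Combined with \eqref{93}, this yields
\[
\|R'\|_{[-b_{k-1},b_{k-1}]}\le\frac{2c_9}{n_{k+1}^m}\le\frac{c_{10}b_{k-1}^{r(m+1)}}{5n_k^{m+1}}.
\]
Now I invoke \eqref{441} with $n=n_k$, $b=b_{k-1}$ and the polynomial $P_r$ constructed above (noting that $n_k>3c_6b_{k-1}^{r-1}\ge 3c_6b_{k-1}^r$ as required, since $b_{k-1}<1$), to obtain
\[
n_k^{m+1}\|f_{n_k,b_{k-1}}+P_r-T_{n_k}\|_{[-b_{k-1},b_{k-1}]}\ge c_{10}b_{k-1}^{r(m+1)}.
\]
Applying the triangle inequality to $f-T_{n_k}=(f_{n_k,b_{k-1}}+P_r-T_{n_k})+R'$ then gives
\[
n_k^{m+1}\|f-T_{n_k}\|_{[-b_{k-1},b_{k-1}]}\ge c_{10}b_{k-1}^{r(m+1)}-\frac{c_{10}b_{k-1}^{r(m+1)}}{5}=\frac{4}{5}c_{10}b_{k-1}^{r(m+1)}.
\]

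Finally, multiplying by $\varepsilon_{n_k}$ and using \eqref{92} re-indexed as $\varepsilon_{n_k}c_{10}b_{k-1}^{r(m+1)}\ge k-1$, I conclude
\[
n_k^{m+1}\varepsilon_{n_k}\|f-T_{n_k}\|\ge\frac{4(k-1)}{5}\ge\frac{k}{2},\qquad k>2,
\]
and taking the infimum over $T_{n_k}\in\mathcal{T}_{n_k}\cap\Delta^{(q)}(Y_s)$ delivers \eqref{fin}. The whole argument hinges on the structural fact that the "earlier" summands of the telescoping series collapse to an algebraic polynomial on the small interval $[-b_{k-1},b_{k-1}]$, which is precisely why the schedule \eqref{91} was chosen when constructing the $b_k$.
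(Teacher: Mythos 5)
Your proof is correct and follows essentially the same route as the paper's: the same decomposition of $f$ into a model piece $f_{n_k,b_{k-1}}$, an algebraic polynomial tail coming from the earlier summands (degree $\le r$ on the small interval because their $(r+1)$-st derivatives vanish there), and a small remainder, followed by Lemma \ref{aux} and the triangle inequality. The only cosmetic difference is your indexing — you fix $k>2$ and work directly with $T_{n_k}$ on $[-b_{k-1},b_{k-1}]$, whereas the paper fixes $k>1$ and works with $T_{n_{k+1}}$ on $[-b_k,b_k]$ and then re-indexes at the end; you also spell out explicitly the sign verification $tT_{n_k}^{(q)}(t)\ge0$ and the condition $n_k\ge 3c_6 b_{k-1}^r$, which the paper leaves implicit.
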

\begin{proof}
Fix $k>1$. Then by (\ref{91}) and (\ref{95}), for every $1\le j\le k-1$,
$$
f_{n_{j+1},b_j}^{(r+1)}(x)=0,\quad \text{if}\quad |x|\le b_k.
$$
Hence,
\begin{equation}\label{317}
P_r(x):=\sum_{j=1}^{k-1}f_{n_{j+1},b_j}(x),\quad |x|\le b_k,
\end{equation}
is an algebraic polynomial of degree $\le r$.

Now, by (\ref{664}) and (\ref{93}),
\begin{align}\label{318}
\sum_{j=k+1}^{\infty}\|f_{n_{j+1},b_j}\|\le& c_9\sum_{j=k+1}^{\infty}\frac1{n_{j+1}^m}\le\frac{c_9}{n_{k+2}^m}\sum_{j=0}^\infty\frac1{2^{jm}}=\frac{2c_9}{n_{k+2}^m}\\
\le&\frac{c_{10}b_k^{r(m+1)}}{5n_{k+1}^{m+1}}\nonumber
\end{align}
Finally, we take an arbitrary polynomial $T_{n_{k+1}}\in\mathcal T_{n_{k+1}}\cap\Delta^{(q)}(Y_s)$ and note, that $tT_{n_{k+1}}^{(q)}\ge 0$ for $|t|\le b_k\le d$.
Therefore (\ref{317}), (\ref{318}) and (\ref{441}), imply
\begin{align*}
\|f-T_{n_{k+1}}\|\ge&\|f-T_{n_{k+1}}\|_{[-b_k,b_k]}=\bigl\|P_r+\sum_{j=k}^{\infty}f_{n_{j+1},b_j}-T_{n_{k+1}}\bigr\|_{[-b_k,b_k]}\\
=&\bigl\|\bigl(P_r+f_{n_{k+1},b_k}-T_{n_{k+1}}\bigr)+\sum_{j=k+1}^{\infty}f_{n_{j+1},b_j}\bigr\|_{[-b_k,b_k]}\\
\ge&\bigl\|P_r+f_{n_{k+1},b_k}-T_{n_{k+1}}\bigr\|_{[-b_k,b_k]}-\bigl\|\sum_{j=k+1}^{\infty}f_{n_{j+1},b_j}\bigr\|\\
\ge& \frac{c_{10}b_k^{r(m+1)}}{n_{k+1}^{m+1}}-\frac{c_{10}b_k^{r(m+1)}}{5n_{k+1}^{m+1}}=\frac{4c_{10}b_k^{r(m+1)}}{5n_{k+1}^{m+1}}.
\end{align*}
Combining with \eqref{92}, we obtain \eqref{fin}, and the proof is complete.
\end{proof}


\begin{thebibliography}{99}

\bibitem{D} H. A. Dzyubenko, {\it Comonotone approximation of twice differentiable periodic functions}, Ukr. Math. J. {\bf 61} (2009), 519--540.
Translated from Ukrains’kyi Matematychnyi Zhurnal, Vol. 61, No. 4 (2009), 435--451.

\bibitem{LS} D. Leviatan and I. A. Shevchuk, {\it Jackson type estimates for piecewise $q$-monotone approximation, $q\ge3$, are not valid},
Pure \& Appl. Functional Analysis {\bf1} (2016), 85--96.

 \bibitem{LZ} G. G. Lorentz and K. L. Zeller, {\it Degree of Approximation by Monotone Polynomials I}, J. Approx. Theory, {\bf1} (1968), 501--504.

\bibitem{P}  M. G. Pleshakov, {\it Comonotone Jackson’s Inequality},J. Approx. Theory, {\bf99} (1999), 409--421.

\bibitem{Pr} A. A. Privalov {\it Theory of interpolation of functions}, Book 1,
Saratov University Publishing House, Saratov 1990, (In Russian).

\bibitem{5}R. S. Varga and A. J. Carpenter, {\it On the Bernstein conjecture in approximation theory},
   Constr. Approx. {\bf1} (1985), 333--348.

 \bibitem{Z} Zalizko, V.D., {\it Coconvex approximation of periodic functions}, Ukr. Math. J. {\bf 59}, (2007), 28--44.
Translated from Ukrains’kyi Matematychnyi Zhurnal, Vol. 59, No. 1, pp. 29--43, January, 2007.


\end{thebibliography}
\end{document}